\documentclass[leqno,11pt,a4paper,twoside]{article}%
\usepackage{amssymb}
\usepackage{amsfonts}
\usepackage{amsmath}
\usepackage{authblk}
\usepackage{graphicx}
\usepackage{ifthen}
\usepackage{comment}
\usepackage{xcolor}
\usepackage{braket}
\usepackage{url}
\usepackage{epsfig}
\usepackage{float}
\usepackage{enumitem}
\usepackage{lipsum,multicol}
\usepackage[T1]{fontenc}
\usepackage[english]{babel}
\usepackage[english]{babel}
\setcounter{MaxMatrixCols}{30}
\setlist[itemize]{noitemsep, topsep=0pt}
\setlist[enumerate]{noitemsep, topsep=0pt}
\setlist[itemize]{leftmargin=*}
\setlist[enumerate]{leftmargin=*}
\providecommand{\U}[1]{\protect\rule{.1in}{.1in}}

\providecommand{\norm}[1]{\left\lVert#1\right\rVert}
\providecommand{\abs}[1]{\left\lvert#1\right\rvert}
\providecommand{\pr}[1]{\left(#1\right)} 
\providecommand{\pp}[1]{\left[#1\right]} 
\providecommand{\set}[1]{\left\lbrace#1\right\rbrace} 
\providecommand{\scal}[1]{\left\langle#1\right\rangle}

\providecommand{\Db}[1]{\textcolor{black}{#1}}


\newcommand{\normi}[3]{\norm{#1}_{
    \ifthenelse{\equal{#2}{1}}{H_0^1\pr{\mathcal{O}_{#3}}}{%
    \ifthenelse{\equal{#2}{-1}}{H^{-1}\pr{\mathcal{O}_{#3}}}{}}}}


\makeatletter
\newcommand{\subjclass}[2][2020]{%
  \let\@oldtitle\@title%
  \gdef\@title{\@oldtitle\footnotetext{\textbf{#1 \emph{Mathematics subject classification.}} #2}}%
}
\newcommand{\keywords}[1]{%
  \let\@@oldtitle\@title%
  \gdef\@title{\@@oldtitle\footnotetext{\textbf{\emph{Key words.}} #1.}}%
}
\makeatother



\oddsidemargin 1pt \evensidemargin 1pt \textheight 690pt \textwidth 470pt
\topmargin 1pt \headheight 1pt
\newtheorem{theorem}{Theorem}

\newtheorem{definition}[theorem]{Definition}

\newtheorem{proposition}[theorem]{Proposition}
\newtheorem{remark}[theorem]{Remark}

\newenvironment{proof}[1][Proof]{\noindent\textbf{#1.} }{\ \rule{0.5em}{0.5em}}

\newtheorem{ass}{Assumption}[section]

\author[1,2]{Dan Goreac}
\author[2,3]{Juan Li}
\author[4]{Pangbo Wang}
\affil[1]{\small \'{E}cole d'Actuariat,  Universit\'{e} Laval, Qu\'{e}bec (QC), G1V 0A6, Canada, \textit{Email: dan.goreac@act.ulaval.ca}} 
\affil[2]{\small School of Mathematics and Statistics, Shandong University, Weihai, Weihai 264209, P.R. China}
\affil[3]{\small Research Center for Mathematics and Interdisciplinary Sciences, Shandong University, Qingdao 266237, P. R. China, \textit{Email: juanli@sdu.edu.cn}}
\affil[4]{\small Zhongtai Securities Institute for Financial Studies Shandong University, Jinan 250100, P.R. China, \textit{E-mail: pangbo.wang@mail.sdu.edu.cn}}
\date{\vspace{-10ex}}

\begin{document}
\title{A Relaxed Control Problem With $\mathbb{L}^\infty$ Cost and Jump Dynamics Motivated by Cyber Risks Insurance\footnote{The authors acknowledge financial support from the NSF of Shandong Province (No. ZR202306020015), the NSF of P.R. China (Nos. W2511002, 12031009), and the National Key R and D Program of China (No. 2018YFA0703900).\\
D.G. acknowledges financial support from National Sciences and Engineering Research Council (NSERC), Canada,  Grant/Award Number: RGPIN-2025-03963.}}
\maketitle
\begin{abstract}
This paper has a double aim. One the one hand, we introduce a uni-nodal network model for cyber risks with firewalled edges and SIR intra-edge spreading. In connection to this, we formulate an insurance problem in which one seeks the running maximal reputation index against all control strategies of the companies represented by edges. On the other hand, we seek to characterize the value function with $\mathbb{L}^\infty$ cost through linear programming techniques and more standard Hamilton-Jacobi integro-differential inequalities.\\
\noindent \textbf{Keywords:} running maximum; occupation measures; stochastic control; Hamilton-Jacobi integro-differential inequality; cyber risks; insurance.\\
\noindent \textbf{MSC2020:} 90C05; 49K45; 93E20; 60J76
\end{abstract}
\section{Introduction}
{The COVID-19 pandemic and recent political and geopolitical events have increasingly influenced everyday vocabulary, introducing widespread use of terms such as "(cyber-)risks" and "contagion/spreading". Naturally, scientific research reflects this reality, as evidenced by the special issue on "Cyber Risk and Security" \cite{DK_2022} and various research initiatives \cite{HL_2018}.

The concept of \emph{cyber-risk}, derived from IT, is inherently linked to the notion of \emph{network}. Consequently, numerous scientific studies focus on network structures combined with forms of spreading. For example, \cite{FWW_2018} examines cyber-loss pricing in a model where nodes are binary and infection/recovery mechanisms operate with jump rates aggregated through an adjacency matrix. Mathematically, the state vector is discrete, allowing simulations using Gillespie's algorithm \cite{Gillespie1977}. Similar clustering-based models are discussed in \cite{Qi_2023} and \cite{AWS_2021}. Other approaches adopt game theory within supply chain contexts, e.g., \cite{NDS_2017}. Additionally, the Susceptible-Infected-Recovered (SIR) model from \cite{KeMcK}, adapted for portfolio elements, can be incorporated into network-based frameworks \cite{HL_2021}, with interaction matrices quantifying compartmental connections \cite{HL_2022}.

These models generally exclude control measures, whether curative (e.g., internal network fault management) or protective (e.g., external anti-malware interventions). However, epidemic control interventions—such as network access restrictions or physical unit restoration—have been extensively studied. Analogous measures in population models include social distancing and vaccination \footnote{The analogous measures in population epidemic models would be social distancing and vaccination.}. Relevant research includes access restriction models \cite{anderson1992infectious,Behncke,hansen2011optimal} and restoration-type interventions \cite{Biswas_2014,Wang_2019,Acunaetal_2021}. The COVID-19 pandemic has further expanded this literature, with studies addressing access-like restrictions \cite{alvarez2020simple,Kruse,Ketch,bolzoni2019optimal,Fre22,AFG_2022} and restoration interventions \cite{Biswas_2014,Wang_2019,Acunaetal_2021}. Stochastic control models have also gained attention in this context \cite{GLX_2022}.}

We envisage a star-shaped model as in Figure \ref{Fig1}. \begin{figure}[H]
\centerline{\includegraphics[width=0.5\columnwidth]{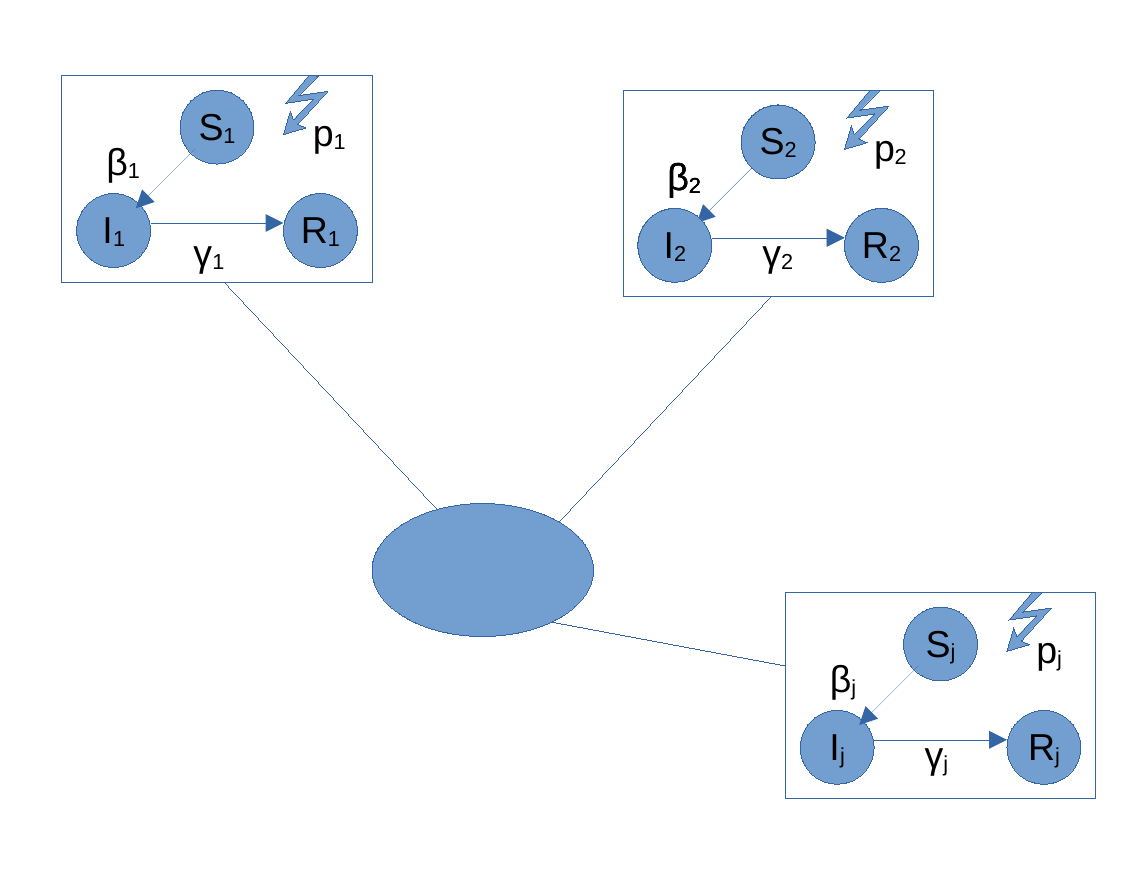}}
\caption{SIR Model With Protection Network.}
\label{Fig1}
\end{figure}
{The central node represents access to a common \emph{external} network, serving as a potential entry point for external infections. Star-like edges correspond to internal networks of individual companies, each governed by an independently controlled Susceptible-Infected-Recovered (SIR) mechanism for infection spread. These edges (indexed by $j$) incorporate continuous protection controls $p_j$, representing anti-malware or firewall measures that influence external infection tolerance and premium costs. Infections manifest at jump times of an independent Poisson process, modeling cyber-attack claims or ransom demands. The insurer's capital grows through state- and control-dependent premiums, while claims aggregate random per-infection costs multiplied by new infection counts. The insurer seeks to maximize a utility function (or reputation index) through an $\inf\sup$ formulation over possible insured companies' decisions, focusing on the supremum in time rather than expected values.

\textbf{Mathematical Context.} This problem extends Hamilton-Jacobi (HJ) theory on networks, building on deterministic dynamics frameworks in \cite{AchdouCamilliCutri2013}, \cite{AchdouOudetTchou2013}, and \cite{BarlesBrianiChasseigne2013}, but leaves junction discontinuities unexplored, for this particular dynamics. \\The $\mathbb{L}^\infty$ cost optimization originates from \cite{barron_ishii_89}, approximating such costs as limits of $\mathbb{L}^q$ norms as $q\rightarrow\infty$. Viscosity solutions for penalized problems are discussed in \cite{crandall_ishii_lions_92}, with state-dependent control constraints requiring extended viscosity theory \cite{barles_94}. For stochastic extensions, \cite{BDR_1994} introduces obstacle methods for dynamic programming principles, while \cite{BPZ_2015} and \cite{KPZ_2018} explore finite- and infinite-horizon adaptations using auxiliary variables.}

In this paper, we deal with the control of jump-type dynamics, generically presented as \begin{equation}
\label{SDEgen0}
d\bar{X}(t)=f\pr{\bar{X}(t),\bar{u}(t)}dt+\int_{\mathbb{R}}g\pr{\bar{X}(t-),\bar{u}(t),y}N(dt,dy),\ t\geq 0.
\end{equation}
Here, $f$ is a piecewise-deterministic controlled velocity, while the jumps are given by a compound Poisson mechanism, but depend on $\bar{X}$. 
{In the insurance model, $f$ represents epidemic spreading within individual companies, coupled with premium calculations for the insurer. External infections, transmitted through the central node, correspond to claims triggered by jump events. Unlike traditional models relying on expected values, this framework employs an (essential) supremum over both time and the random variable $\omega$. This approach prioritizes best-case scenarios rather than average outcomes, as explained in Section \ref{Sec2.4}.\\
The exact formulation of our control problem is in a relaxed sense 
\begin{equation}\label{V0}
V(\bar{x}_0):=\inf_{\gamma\in \Theta_1(1,\bar{x}_0)}\norm{(a,\bar{x})\mapsto aL\pr{\bar{x}}}_{\mathbb{L}^\infty\pr{d\gamma}},
\end{equation}where 
$\bar{x}_0$ corresponds to the initial configuration, and $\bar{x}$ to the controlled trajectory, the adjoined $a$ variable stands for a discount parameter $e^{-ht}$ necessary to deal with the infinite horizon, the measures $\gamma\in \Theta_1(1,\bar{x}_0)$ correspond to relaxed controls (see Section \ref{Subsec3.3} for the precise definitions). Roughly said, $V$ corresponds to the more standard formulation \[\inf_{\bar{u}\in\bar{\mathcal{U}}}\norm{\pr{\omega,t}\mapsto e^{-ht}L\pr{\bar{X}^{\bar{x}_0,\bar{u}}(\omega,t)}}_{\mathbb{L}^\infty\pr{\Omega\times\mathbb{R}_+,\mathbb{P}\otimes e^{-t}dt;\mathbb{R}}}.\]}

Before specifying the research program developed in the paper, we need to provide some details on the state-of-the art concerning the type of relaxation we have just described. {The relaxation technique aims at embedding the trajectories in what is called \emph{occupation measures} quantifying the time a system spends in specific states under given controls, enabling the transformation of dynamic control problems into convex optimization frameworks via moment-based constraints. These measures bridge continuous-time dynamics with static optimization by encoding state trajectories and control policies into probability distributions, facilitating analysis through linear programming and semidefinite relaxations.} Occupation measures are particularly adapted to non-convex dynamics in connection with problems presenting an asymptotic behavior. 
For standard control problems, when the costs are continuous, either expressing the cost with respect to controls or employing measure sets leads to the same value. The reader is referred to \cite{gaitsgory_92}, \cite{gaitsgory_04}, \cite{gaitsgory_quincampoix_09} in the deterministic setting,  \cite{GHS_2021} for reflected dynamics,
to \cite{borkar_gaitsgory_07}, \cite{G2} for diffusions, \cite{goreac_serea_ZubovPDMP}, or \cite{RS_2020} for various jump-type dynamics.  When used carefully, such sets of measures can be endowed with the compactness lacking on trajectories in the stochastic setting. Furthermore, the dual formulations of such problems are naturally associated with the Hamilton-Jacobi-Bellman equations satisfied by the value functions, see, for instance, \cite[Lemma 3.2]{gaitsgory_quincampoix_09}.

\subsection{Main Research Objectives and Structure of the Paper}

\textbf{Theoretical Objectives}\\

From a theoretical point of view, we wish to characterize the relaxed value function \eqref{V0}:
\begin{enumerate}
\item through a dual formulation as the point-wise supremum of eventually regular subsolutions of associated Hamilton-Jacobi equations (see Theorem \ref{ThMain});
\item as a viscosity solution of the natural Hamilton-Jacobi integro-differential inequality
\begin{equation}\label{HJB0}
\max\set{-h\psi(x)+H\pr{\psi,x,\psi(x),\nabla\psi(x)},L(x)-V(x)}=0,
\end{equation}
for a discontinuous Hamiltonian \begin{equation*}\begin{split}
H(\psi,x,r,p)&=\inf_{u\in U(\psi,x,r)}\scal{p,f(x,u)},\\&\textnormal{ where }U(\psi,x,r)=\set{u\in U:\ \psi(x+g(x,u,y))\leq r,\ \mathbb{P}_{C_1}-a.s.},\end{split}
\end{equation*}
for $x\in\mathbb{R}^m,\ r\in\mathbb{R},\ p\in\mathbb{R}^m$ and $\psi$ a bounded uniformly continuous function ($\psi  \in BUC(\mathbb{R}^m;\mathbb{R}$). The precise definition of viscosity sub/super-solution will involve the upper/lower limits of $H$ (see Definition \ref{DefSol}). The law $\mathbb{P}_{C_1}$ models claims and corresponds to the non-temporal component in the compensator of the compound Poisson process.
\end{enumerate}
Both characterizations rely on the $\mathbb{L}^q$-cost value functions $V_q$ approximating $V$. The latter characterization requires {equi-continuity of the family of value functions  $(V_q)_{q>1}$} guaranteed under reasonable assumptions on the post-jump positions $x+g(x,u,y)$, {cf. Assumption \ref{Ass_g_1}.}\\

{The main contributions of the paper are the following.
\begin{enumerate}
    \item We establish a novel dual characterization for the approximating value functions $V_q$. Theorem \ref{PropTool1} Assertion 3 provides entirely new results, explicitly crafted for the $q\rightarrow\infty$ limit transition, which lie outside the scope of \cite{goreac_serea_ZubovPDMP}.
    \item The convergence of $V_q$ to $V$ and its dual characterization give the first key result in Theorem \ref{ThMain}.
    \item   
Under Assumption \ref{Ass_g_1}, we establish a connection between $ V $ and \eqref{HJB0} in Section \ref{SecClassSol}, adapting methods from \cite{barron_ishii_89} to non-local Hamiltonians. Equi-continuity of the family  $\{V_q\}_q $ is demonstrated via Remark \ref{RemgPartCase}, a critical step for deriving the limit Hamiltonian (Proposition \ref{PropU}, Assertion 2; Theorem \ref{ThClassSol}). 
\end{enumerate}
{
During the review process, it was noted that providing additional details on the frequently cited paper \cite{goreac_serea_ZubovPDMP} could enhance the reader's understanding.
\begin{enumerate}\item The work in \cite{goreac_serea_ZubovPDMP} addresses more general dynamics, and its arguments can be directly applied to functions of the form $V_q^q$, broadly speaking. For this reason, we have relegated the proof of Theorem \ref{ThGaclco} to the Appendix, as it primarily reproduces results from \cite{goreac_serea_ZubovPDMP}. The main limitations are:
\begin{enumerate}
\item The modulus of continuity for such functions (see \cite[Proposition 6]{goreac_serea_ZubovPDMP}, which relies on \cite[Theorem 3.6]{Soner86_2}), and consequently for $V_q$, is not given explicitly;
\item The duality characterization is established for $V_q^q$, rather than directly for $V_q$.\end{enumerate}
\noindent  In contrast, the present paper:
    \begin{enumerate}
        \item Provides detailed trajectory estimates for shaken systems and a direct dual characterization of $V_q$ in Subsection \ref{SubsecStep1}. Our analysis explicitly quantifies the dependence on $q$ in the $L^q$ cost functions, which is crucial for the limit transition as $q \to \infty$. This is particularly highlighted in Remark \ref{RemgPartCase}, and the uniform continuity plays a key role in Section \ref{SecClassSol}.
        \item Establishes a dual characterization of $V_q$ (Theorem \ref{PropTool1}, Assertion 3) that is independent from the duality result for $V_q^q$ in \cite{goreac_serea_ZubovPDMP}, although some proof techniques share similarities. Furthermore, its formulation involves the candidates $\psi$ evaluated at $\bar{x}_0$ (please compare with Theorem \ref{ThGaclco}, assertion 2, and a restriction more likely to converge as $q\rightarrow\infty$.\footnote{Please take a look at some more precise details after Theorem \ref{ThMain}.}
    \end{enumerate}
\item The compactness of occupation measures in the 2-Wasserstein space is derived from the trajectory estimates in Proposition \ref{PropEstimTraj} combined with the bounds in \eqref{Theta}. Unlike \cite[Corollary 8]{goreac_serea_ZubovPDMP}, which only guarantees compactness with respect to weak convergence under constraints, our result ensures compactness in the Wasserstein metric. The moment estimates are essential in order to efficiently implement primal/dual algorithms and numerically find the optimal strategies, see \cite{GLWX_AMC_2024}.
\end{enumerate}
}

\noindent \textbf{Model Objectives}\\

{The second objective of our work is to provide practical motivation for the theoretical inquiries outlined earlier. To this end, we introduce in Section \ref{Sec2Model} a mathematical framework for cyber-risk modeling intrinsically linked to the network topology shown in Figure \ref{Fig1}. The model features an insurer covering claims for interconnected companies (represented as edges) sharing a common external network (central node). Internal infection spread follows a piecewise deterministic SIR dynamic, where the infected count in each edge evolves through jump mechanisms driven by the network’s average infectiousness and modulated by a continuous \emph{protection level} control. Implementing protection generates premium income for the insurer, while infection-driven jumps trigger claims that directly affect capital reserves. The insurer’s \emph{reputation level} is quantified as the running supremum of an utility function of its capital, minimized over all admissible control policies. Insured entities retain full agency over risk management outcomes through two key actions: (i) strategic selection of protection protocols, (ii) implementation of internal containment measures to mitigate infection propagation, and, implicitly through evaluative feedback directed at the insurer—collectively shaping operational and contractual dynamics.}\\

\noindent \textbf{Structure of the paper}\\

The paper is organized as follows. In the remaining of the section we introduce some notations. Section \ref{Sec2Model} provides the details on the insurance of cyber-risks model we have in mind. Section \ref{Sec3} introduces the rigorous relaxed formulation of the problem for generic controlled dynamics \eqref{SDEgen0}.  In particular, we explain the role of the discount as an adjoined variable and coherence with more general piecewise deterministic Markov processes as considered in \cite{goreac_serea_ZubovPDMP}.  Subsection \ref{Subsec3.2} provides a description of the "shake-of-coefficients" approach and the estimates on trajectories. These allow a proper formulation of the set of constraints $\Theta(a_0,x_0)$ and the consistency of its marginals in $\Theta_1(a_0,x_0)$ in \eqref{Theta} and Theorem \ref{ThGaclco}. The first main result is stated in Section \ref{SecMain1}, Theorem \ref{PropTool1} and the limiting result in Proposition \ref{PropV=supVq} making use of the compactness of the measure set of constraints. Finally, the adaptation of the program in \cite{barron_ishii_89} to the non-local framework makes the object of Section \ref{SecClassSol}. We identify the limit Hamiltonian in Proposition \ref{PropBI}. The state-depending sets of controls and the relaxed Hamiltonians make the object of Proposition \ref{PropU}, and the viscosity result is given in Theorem \ref{ThClassSol}. {Section \ref{SecNew} provides further details on how the standing assumptions integrate with the proposed cyber risk model.} Finally, for our readers' comfort, the intermediate, or more standard proofs are gathered in the Appendix.

\subsection{Notations}
Throughout the paper we make use of several notations resumed hereafter.
\begin{itemize}
\item We let $\mathbb{R}_+$ stand for the non-negative real numbers and $\mathbb{T}:=\set{\pr{s,i}\in\mathbb{R}_+^2:\ s+i\leq 1}$ denote a fundamental triangle;
\item Given an integer $n\geq 1$, we let $\mathbb{T}_n:=\set{\pr{s_1,s_2,\ldots,s_n,i_1,i_2,\ldots,i_n}:\ \pr{s_j,i_j}\in\mathbb{T},\ \forall 1\leq j\leq n}$;
\item The set of controls $U:=\pp{u_{\min},u_{\max}}$ and $\mathcal{P}rev\subset [1,\infty)^n$ are assumed to be compact.
\item For the theoretical results,  we use the following.
\begin{itemize}
\item The state space is considered to be some subset of the Euclidean space $\mathbb{R}^m$, where $m$ is a positive integer.
\item The set $B_\delta(x)$ denotes the $\delta>0$-closed ball around $x\in\mathbb{R}^m$.
\item The control space $\bar{U}$ is a (subset of a) compact metric space. \footnote{{It is worth mentioning that, in the example of cyber risks discussed in Section \ref{Sec2Model}, the generic control set $\bar{U}$ represents the product set $\bar{U} = U \times \mathcal{P}rev$. Furthermore, we denote by $\bar{\mathcal{U}}$ the family of predictable, $\bar{U}$-valued processes that appear in the generic dynamics described in equation \eqref{SDEgen}.}}
\item The family $C_b^1$ stands for the family of real-valued, continuously differentiable functions that are, together with their first-order derivatives, bounded. If, instead of boundedness, on asks for quadratic growth, the resulting set is $C_2^1$.
\item The set $\mathcal{P}(\cdot)$ stands for probability measures on some metric space given as argument and endowed with its Borel $\sigma$-field.
\item For a probability measure $\gamma\in\mathcal{P}(\cdot)$, we lett $Supp\ \gamma$ denote its support.
\item For a random variable $C_1$ modeling claim sizes, we let $\mathbb{P}_{C_1}$ denote its law.
\end{itemize}
\end{itemize}
\section{A Controlled Jump Model for Cyber-Risks With Firewalled Edges and SIR Intra-Edge Spreading}\label{Sec2Model}

\subsection{Heuristic Cyber-Risk Model}

In our proposed model, the network comprises $n \geq 1$ entities, each with a domestic network (edges of an interconnecting network, indexed by $j \in \{1,\ldots,n\}$). Contagion within each edge follows an SIR model with entity-specific contact ($\beta_j$) and recovery ($\gamma_j$) rates.\\
\noindent\textbf{Key Deterministic Dynamics} The SIR (Susceptible-Infected-Recovered) model is a foundational epidemiological framework describing disease spread through three compartments: (a) susceptible ($S_j$): individuals at risk of infection, (b) infected ($I_j$): infectious individuals transmitting the disease, and (c) recovered ($R_j$): individuals who have recovered (or died) and gained immunity. \\ 
The model uses two entity-specific parameters: \emph{contact rate} ($\beta_j$): reflects transmission intensity, combining interaction frequency and pathogen transmissibility, and \emph{recovery rate} ($\gamma_j$): defines the inverse of the average infectious period ($\gamma_j = 1/D_j$, where $D_j$ is duration). \\
Disregarding the jumps, the deterministic evolution within each edge follows:  
$
\frac{dS_j}{dt} = -\beta_j S_j I_j, \quad \frac{dI_j}{dt} = \beta_j S_j I_j - \gamma_j I_j, \quad \frac{dR_j}{dt} = \gamma_j I_j.
$\\ 
\noindent\textbf{Control Implications}
Adjusting $\beta_j$ (via reduced contacts or transmissibility) or $\gamma_j$ (via treatments) alters the \emph{basic reproduction number}, enabling containment strategies like prophylactic or curative measures. In our model, we only control, in a multiplicative manner, $\beta_j$ changing to $\beta_ju(t)$.\\
\noindent\textbf{Key Jump Dynamics}  
There are three main features: \textit{intra-infectiousness} governed by the infected proportion $i_j$ in each edge, \textit{inter-infectiousness} computed as the average $i^0 := \frac{\sum_{j=1}^n i_j}{n}$, and \textit{controls} since each entity selects a continuous protection level $p_j$ (firewall) and intra-exposure reduction $u_j$.  \\
\noindent\textbf{Infection Mechanism}  
Extra infections occur when $i_j p_j \leq i^0$, triggering a jump in intra-infectiousness to $i_j \leftarrow \frac{i^0}{p_j}$. Protection incurs a premium cost and enables claims proportional to changes in $i_j$, influencing an insurer’s reserve $x$. The insurer’s \textit{reputation index} is defined as the historical maximum of $x$, minimized over independent control parameters $(p_j, u_j)$. 

\subsection{Description of the Simplified Model}
The network presents a single node with $n\geq 1$ edges. Each edge $1\leq j\leq n$ has:  
  \begin{itemize}  
    \item SIR dynamics with edge-specific parameters,  
    \item Protection level $p_j$ (firewall), e.g., medium ($p_j=1$) or high ($p_j=\alpha>1$); the admissible protections are described by $\mathcal{P}rev$.  
  \end{itemize}  
We further assume constant data volume per component ($r_j=1-s_j-i_j$), yielding a $2n$-dimensional state space $(s_1,\ldots,s_n,i_1,\ldots,i_n)$, with $n$-dimensional control $(s,i,p)\in\mathbb{T}_n\times\mathcal{P}rev$.  The insurer premiums account for: \emph{internal exposure} quantified through the infected population,  the \emph{external protection} given by the firewall level $p_j$, and the \emph{lock-down}, i.e., the legislated or collective policy reducing infections (e.g., COVID-19 premium reimbursements\footnote{French insurers MAIF, GMF, Matmut}). \\  
The global \emph{premium} is computed via a functional  
$
c:\mathbb{T}_n \times [u_{\min},1] \times \mathcal{P}rev \rightarrow \mathbb{R}_+,  
$
where $\mathbb{T}_n$ represents the system state space, $[u_{\min},1]$ defines admissible internal exposure controls, and $\mathcal{P}rev$ encodes protection levels.  
We consider \emph{Poisson-driven updates} through which the edge status updates occur at jump times of a Poisson process (rate $\lambda>0$), e.g., ransom demands.  \\
Concerning the \emph{infection mechanism}: 
   \begin{itemize}
       \item Edge $j$ is infected if its \emph{internal infectiousness} $i_j$ falls below the node’s \emph{global average infectiousness} $i^0 := {\frac{\sum_{j=1}^n i_j}{n}}$ modulated by protection $p_j$, i.e., 
    $ i_j<\frac{i^0}{p_j}$. For instance, under medium protection ($p_j=1$), infection occurs when $i_j < i^0$.
    \item Upon infection, the following occur. The internal infectiousness updates to $i_j \leftarrow \frac{i^0}{p_j}$ (scaled by protection). Susceptible proportion updates to $s_j \leftarrow \min\set{s_j, 1 - \frac{i^0}{p_j}}$ (ensuring population normalization $s + i + r \leq 1$). Re-infections (recovered → infected) are modeled as instantaneous transitions.\\
The re-infections are assumed instantaneous to avoid tracking recovered individuals ($r = 1 - s - i$). Let us note that higher $p_j$ (e.g., $p_j = \alpha > 1$) reduces infection likelihood by impacting the threshold $\frac{i^0}{p_j}$.
\end{itemize}
\subsection{Adding a Capital Component; the Reference Mathematical Model}
Let $(S,I):=\pr{S_1,S_2,\ldots, S_n,I_1,I_2,\ldots, I_n}$ be the stochastic system describing the evolution of the states of the system.  Furthermore, let $N$ be a Poisson process with intensity $\lambda>0$. Then, given a predictable lock-down policy $u$ and a predictable decision on the external protection levels $p\in \mathcal{P}$, one has 
{
\begin{equation}\label{SIR_dyn}\begin{cases}
dS_j^{u,p}(t)=&-\beta_ju(t)S_j^{u,p}(t)I_j^{u,p}(t)dt-\pr{S_j^{u,p}(t-)-1+\frac{I^{0,u,p}(t-)}{p_j(t)}}^+dN(t);\\
dI_j^{u,p}(t)=&\pr{\beta_j u(t)S_j^{u,p}(t)-\gamma_j}I_j^{u,p}(t)dt+\pr{\frac{I^{0,u,p}(t-)}{p_j(t)}-I_j^{u,p}(t-)}^+dN(t);\\
dR^{u,p}_j(t)=&\gamma_jI_j^{u,p}(t)dt\\&+\pp{\pr{S_j^{u,p}(t-)-1+\frac{I^{0,u,p}(t-)}{p_j(t)}}^+-\pr{\frac{I^{0,u,p}(t-)}{p_j(t)}-I_j^{u,p}(t-)}^+}dN(t),\\
I^{0,u,p}(t)=&\frac{1}{n}\sum_{1\leq j\leq n}{I_j^{u,p}(t)}.
\end{cases}
\end{equation}}Here, $x^+=\max\set{x,0}$. This is completed with the equation of the capital of the insurance company.  When $u=1$, there is no access restriction policy available and the claims are given by mutually independent random variables (independent as well of the Poisson process) denoted by $C_k$. Their law will be designated by $\mathbb{P}_C$ ($C$ being the generic random variable) and supported on $\mathbb{R}_+$.  The associated {compound} Poisson process is synthesized by the random measure $N(dt,dy)$ on $\mathbb{R}_+^2$. Then, if the claim were paid per \Db{new} infection, this would lead to a total payment on $\pp{0,t}$ of type \[\sum_{1\leq j\leq n}\int_0^t\int_{\mathbb{R}_+}\pr{\frac{\sum_{1\leq \Db{k}\leq n}I_{\Db{k}}(l-)}{np_j(l)}-I_j(l-)}^+yN(dl,dy).\]
When the (internal) contact rate is reduced, due to limitations of access to the individual networks, this is likely to reduce the claims. To simplify things, we consider a proportional reinsurance-like model for the claims, i.e., given the predictable strategy $u$, the claims are of type \[\sum_{1\leq j\leq n}\int_0^t\int_{\mathbb{R}_+}\pr{\frac{\sum_{1\leq \Db{k}\leq n}I_{\Db{k}}(l-)}{np_j(l)}-I_j(l-)}^+u(l)yN(dl,dy).\]Putting all these together, one gets the following equation for the capital
\begin{equation}
\label{reserve}
dX^{u,p}(t)=c(S(t),I(t),u(t),p(t))dt-\sum_{1\leq j\leq n}\int_{\mathbb{R}_+}\pr{\frac{\sum_{1\leq \Db{k}\leq n}I_{\Db{k}}(t-)}{np_j(t)}-I_j(t-)}^+u(t)yN(dt,dy).
\end{equation}
This is, of course, a multiplicative noise model in which claim $C_k$ is distributed to all the newly infected individuals, regardless of their edge.  In this paper we focus on improving a \emph{reputation index} for the insurance company uniformly in the policies $(u,p)$ on which the insurance company has no saying. Roughly speaking, we do not care about the capital position becoming negative (this can be reset to $0$ by capital injection), but this may reflect negatively on the reputation.

\subsection{Maximizing the Insurer's Reputation. A Relaxed Formulation}\label{Sec2.4}
We consider a classical non-negative, Lagrangian $L(s,i,x)$ acting as a reputation/performance index. \Db{A reputation index for an insurance company aggregates data from various sources, such as customer reviews, financial performance, and industry rankings, to measure how the company is perceived by stakeholders. Online reviews play a key role in this process, reflecting customer satisfaction and influencing trust and visibility. Rankings derived from such indices can have lasting value; for example, a company recognized as "Best in Claims Handling in 2020" can continue to leverage this accolade in marketing materials to bolster its credibility and attract new clients even years later. This long-term utility underscores the importance of maintaining a strong reputation and consistently high performance.\\
The supremum cost framework addresses optimization problems where the maximum cost is minimized across all possible decisions taken by the policyholders. When focusing on review-driven metrics, policyholders frequently attribute systemic dissatisfaction to insurers regardless of individual claim circumstances. This tendency manifests in reputation indices through minimization components that penalize negative feedback patterns.\\
The ultimate control lies with policyholders, who shape market dynamics through their preferences (in this particular framework, curative measures and external protection). Insurers strategically adopt the most favorable index results to enhance competitiveness and attract clientele.}
In connection to this, we consider the problem of maximizing, over time, (and $\omega\in \Omega$)  \[L\pr{S^{S_0,I_0,u,p}(t),I^{S_0,I_0,u,p}(t),X^{S_0,I_0,X_0,u,p}(t)}.\]

\noindent We will consider a relaxed formulation for several reasons.\\
\noindent The most relevant stochastic case (see Proposition \ref{PropPremium}) is when the dependence of $u$ is affine, i.e.  $c(s,i,u,p)=c(s,i,p)u$ and $\mathcal{P}rev=\set{1,\alpha}$ consists of two scenarios. In this case, the post-jump is not convex in $p$\footnote{The reader is invited to take a look at the jump occurring in $I$; this is also valid if $\mathcal{P}rev$ is an interval.} and the existence of optimal controls cannot be obtained using standard arguments.\\
\noindent On the other hand, if the number of edges $n=1$, then the system is reduced to a deterministic one.  In this case,  by adapting the procedure in \cite{barron_ishii_89} from the uniform law on a compact time interval to the exponential $e^{-t}dt$ one on $\mathbb{R}_+$,  the $\mathbb{L}^\infty\pr{\mathbb{R},e^{-t}dt;\mathbb{R}}$-norm can be approximated with $\mathbb{L}^q\pr{\mathbb{R},e^{-t}dt;\mathbb{R}}$-norms as $q\rightarrow\infty$. \\

\noindent Unlike previous works on the subject in stochastic frameworks (\cite{BDR_1994}, \cite{KPZ_2018}), our stochasticity is of jump nature and we choose not to seek to minimize over predictable $(u,p)$ the expectation of the running maximum $\mathbb{E}\pp{\underset{t}{esssup}\ L}$ but rather $\underset{t,\omega}{esssup}\ L$. As such, we do not use a further variable to infer the dynamic programming principle, but rather the structure properties of the \emph{occupation measures} as a natural tool for asymptotic-involving control problems.

\section{Relaxed Formulations for Control Problems With Jump Dynamics}\label{Sec3}
\subsection{Generic Dynamics}
Prior to giving our theoretical considerations and in order both to simplify notations and to work under more generality, let us consider a generic control system of the form 
\begin{equation}
\label{SDEgen}
d\bar{X}(t)=f\pr{\bar{X}(t),\bar{u}(t)}dt+\int_{\mathbb{R}}g\pr{\bar{X}(t-),\bar{u}(t),y}N(dt,dy),\ t\geq 0.
\end{equation}
Here, $N$ corresponds to a {compound} Poisson process and the compensator is $\hat{N}(dtdy)=\lambda\mathbb{P}_{C_1}(dy)$, where the random variable $C_1$ models claim sizes,  and $\mathbb{P}_{C_1}$ denotes its law. The filtration $\mathbb{F}$ will be the natural one generated by $N$ and completed with the $\mathbb
{P}$-null sets, and notions such as predictability refer to this filtration. \\
Admissible controls form a class $\bar{\mathcal{U}}$ and they refer to predictable $\bar{U}$-valued processes.\\
We will enforce the following assumptions.
\begin{ass}\label{Ass_gen}
\begin{enumerate}
\item The state space is $\mathbb{R}^m$, for some positive integer $m\geq 1$, and the control space $\bar{U}$ is a compact (subspace of a) metric space.
\item The drift coefficient $f:\mathbb{R}^m\times \bar{U}\longrightarrow\mathbb{R}^m$ is uniformly continuous, bounded by $\norm{f}_0$ and $\pp{f}_1$-Lipschitz continuous in the state variable $\bar{x}$, uniformly with respect to $\bar{u}\in\bar{U}$.
\item The jump coefficient $\Db{g}:\mathbb{R}^m\times \bar{U}\times \mathbb{R}\longrightarrow\mathbb{R}^m$ is uniformly continuous. For every $y\in \mathbb{R}$, $g\pr{\cdot,\cdot,y}$ is bounded by $\norm{g}_0(y)$ and $\pp{g}_1(y)$-Lipschitz continuous in the state variable $\bar{x}$, uniformly with respect to $\bar{u}\in\bar{U}$.
\item The functions $\norm{g}_0(\cdot)$ and $\pp{g}_1(\cdot)$ are fourth-order-integrable w.r.t. $\mathbb{P}_{C_1}$.\footnote{Actually, any order exceeding $2$ suffices to guarantee that the associated occupation measures belong to the Wasserstein space $\mathcal{P}_2$.}
\end{enumerate}
\end{ass}
Under this assumption, for every $\bar{x}_0\in\mathbb{R}^m$ and every admissible control $\bar{u}\in \bar{\mathcal{U}}$, the equation \eqref{SDEgen} admits a unique solution denoted by $\bar{X}^{\bar{x}_0,\bar{u}}$.\\
\noindent Let us fix, for the time being, $h>0$ large enough. We will extend the dynamics by adding the equation
 \begin{equation}
 da^{a_0}(t)=-ha^{a_0}(t)dt,\ a(0)=a_0\in\mathbb{R}.
 \end{equation}
 \begin{remark}
 \begin{enumerate}
 \item With the notations of \cite{goreac_serea_ZubovPDMP},  one can consider this system $\pr{a,\bar{X}}$ as a piecewise deterministic Markov process in which \begin{itemize}
 \item one adds a discrete component $\gamma\in\set{0,1}$ systematically switching in order to indicate a non-degenerate post-jump transition measure; This renders the system $\pr{\gamma,a,\bar{X}}$ a true (controlled) piecewise deterministic Markov process in the sense of \cite{davis_93};
 \item the deterministic velocity leaves $\gamma$ unchanged and is given by $\bar{f}(\gamma,a,\bar{x},\bar{u})=\pr{0,-ha,f\pr{\bar{x},\bar{u}}}$;
 \item the function $\bar{\lambda}=\lambda$ is constant;
 \item the jump is systematically switching in $\gamma$, leaves $a$ unchanged and is completed with the $g$ from before, i.e., \[\bar{Q}(\gamma, a,\bar{x},\bar{u},d\gamma',da',d\bar{y})=\delta_{1-\gamma}(d\gamma')\delta_{a}(da')\int_{\mathbb{R}}\delta_{\bar{x}+g\pr{\bar{x},\bar{u},y}}\pr{d\bar{y}}\mathbb{P}_{C_1}(dy).\]
 \end{itemize}
 Here and elsewhere, $\delta$ denotes a Dirac mass, while $d\gamma'\times da'\times d\bar{y}\subset\set{0,1}\times \mathbb{R}\times\mathbb{R}^{m}$ is a Borel set.
\item  $\lambda Q$ corresponds to the usual jump part, i.e., given $\phi:\mathbb{R}^{m}\longrightarrow\mathbb{R}$ bounded and uniformly continuous, one has  
\begin{align*}\int_{\mathbb{R}^{m}}\pr{\phi(\bar{y})-\phi(\bar{x})}(\lambda Q)(\gamma, a,\bar{x},\bar{u},\set{0,1},\mathbb{R},d\bar{y})=\lambda\pp{\int_{\mathbb{R}}\phi\pr{\bar{x}+g\pr{\bar{x},\bar{u},y}}\mathbb{P}_{C_1}(dy)-\phi(\bar{x})}.
\end{align*}
\item The assumptions on \cite[Page 213]{goreac_serea_ZubovPDMP} are trivially satisfied: (A1) follows from Assumption \ref{Ass1}; (A2) is trivial as $\lambda$ is constant; (A3) is linked to $g$ being Lipschitz-continuous in the state variable uniformly in control and $y$; (A4b) follows from $g$ being bounded, while (A4a) is not needed in this paper (as it primarily targets stability issues). As a consequence, we can refer to the results in \cite{goreac_serea_ZubovPDMP} on linearization techniques. As an alternative, one can take a look at \cite{Serrano_2015}.
\end{enumerate}
 \end{remark}
 \subsection{Some Basic Estimates}\label{Subsec3.2}
 Inspired by Krylov's shaking of coefficients, cf. \cite{Krylov_step_99}, \cite{krylov_00} (see also \cite{barles_jakobsen_02}) for diffusions, let us proceed in several steps to emphasize the properties we are going to need for approximating trajectories or value functions.\\
 
\noindent\textbf{Step 1.} Extending the dynamics

We consider $f^+(\bar{x},\bar{u},e):=f(\bar{x}+e,\bar{u})$ and $\Db{g^+}(\bar{x},\bar{u},e)=e+g(\bar{x}+e,\bar{u})$, where $e\in\mathbb{R}^m$; we will actually be interested in \[e\in B_1(0):=\set{x\in\mathbb{R}^m:\ \abs{x}\leq 1}. \]One gets $f^+$ that is $\pp{f}_1$-Lipschitz-continuous in space, uniformly with respect to $(\bar{u},e)\in \bar{U}\times \mathbb{R}^m$ and bounded by $\norm{f}_0$. The same kind of assertions hold true for $g^+$ (the same Lipschitz constant $\pp{g}_1$, and a bound of type $1+\norm{g}_0(y)$, which, in order not to complicate the notations, we will still denote by $\norm{g}_0(y)$ with the same integrability properties as the initial function).\\

We are now able to consider the control system
\begin{equation}\label{SDEext}\begin{cases}
d\bar{X}^{+}(t)=f^+\pr{\bar{X}^{+}(t),\bar{u}(t),e(t)}dt+\int_{\mathbb{R}}g^+\pr{\bar{X}^{+}(t-),\bar{u}(t),e(t),y}N(dt,dy),\\
\bar{X}^{+}(0)=0,
\end{cases}
\end{equation}
with $\bar{u}\in\bar{\mathcal{U}}$ and $e$ a predictable $B_1(0)$-valued control process. As before, the solution can be written with an explicit dependence on initial datum and controls as $X^{+,\bar{x}_0,\bar{u},e}$.\\

\noindent\textbf{Step 2.} We are going to exhibit some basic estimates for the behaviour of the solutions.
\begin{proposition}\label{PropEstimTraj}
Let us consider that Assumption \ref{Ass_gen} holds true. Then the following assertions are valid.
\begin{enumerate}
\item For every initial data $\bar{X}_k(0)\in \mathbb{R}^m$, with $k\in\set{1,2}$, and every admissible control couple $(\bar{u},e)$, if $\bar{X}_k:=\bar{X}^{+,\bar{X}_k(0),\bar{u},e}$ denotes the associated solutions in equation \eqref{SDEext}, then 
\begin{equation}
\label{EstimX1-X2}
\mathbb{E}\pp{\abs{\pr{\bar{X}_1-\bar{X}_2}(t)}^2}\leq e^{\set{2\pp{f}_1+\lambda\int_{\mathbb{R}}\pp{2\pp{g}_1(y)+\pp{g}_1^2(y)}\mathbb{P}_{C_1}(dy)}t}\abs{\pr{\bar{X}_1-\bar{X}_2}(0)}^2,
\end{equation}
for all $t\geq 0$.
\item For every initial datum $\bar{X}(0)\in\mathbb{R}^m$, and every admissible control $\pr{\bar{u},e}$ such that $\abs{e}\leq 1,\ \mathbb{P}\times dt$-a.s. one has, for all $t\geq 0$,
\begin{equation}
\label{EstimX+-X}\begin{split}
&\mathbb{E}\pp{\abs{\pr{\bar{X}^{+,\bar{X}(0),\bar{u},e}-\bar{X}^{\bar{X}(0),\bar{u}}}(t)}^2}\\&\leq  \pr{\pp{f}_1+2\lambda\int_{\mathbb{R}}\pr{1+\pp{g}_1\pr{y}}^2\mathbb{P}_{C_1}(dy)}e^{\pr{3\pp{f}_1+\lambda\int_{\mathbb{R}}\pp{1+4\pp{g}_1(y)+2\pp{g}_1^2(y)}\mathbb{P}_{C_1}(dy)}t}\int_0^t\mathbb{E}\pp{\abs{e(l)}^2}dl.
\end{split}
\end{equation}
\item If $\bar{X}_0$ is a fixed initial datum, then, for every admissible (predictable) $\bar{U}\times B_1(0)$-valued control $\pr{\bar{u},e}$, 
\begin{equation}
\label{CompactEstim}
\begin{split}
\mathbb{E}\pp{\abs{\bar{X}^{+,\bar{X}_0,\bar{u},e}(t)}^4}\leq e^{\frac{\lambda+1}{2}t}\pr{\abs{\bar{X}_0}^4+5^4\max\set{\lambda,1}\pr{\norm{f}_0^4
+\int_{\mathbb{R}}\norm{g}_0^4(y)\mathbb{P}_{C_1}(dy)}t}.
\end{split}
\end{equation}
\item Let us assume, in addition, that $\bar{x}\mapsto \bar{x}+g\pr{\bar{x},\bar{u},y}$ is $1$-Lipschitz, for every $\pr{\bar{u},y}\in \bar{U}\times Supp\pr{\mathbb{P}_{C_1}}$. Then, for every $q\geq 2$, every initial data $\bar{X}_k(0)\in\mathbb{R}^m$, with $k\in\set{1,2}$, and every admissible control couple $(\bar{u},e)$, 
\begin{equation}
\label{EstimX1-X2_q}
\mathbb{E}\pp{\abs{\pr{\bar{X}^{+,\bar{X}_1(0),\bar{u},e}-\bar{X}^{+,\bar{X}_2(0),\bar{u},e}}(t)}^q}\leq e^{q\pp{f}_1t}\abs{\pr{\bar{X}_1(0)-\bar{X}_2(0)}}^q,
\end{equation}
for all $t\geq 0$. Furthermore,
\begin{equation}
\label{EstimX+-xg}
\mathbb{E}\pp{\abs{\pr{\bar{X}^{+,\bar{X}_1(0),\bar{u},e}-\bar{X}^{\bar{X}_1(0),\bar{u}}}(t)}^q}\leq \pr{\pp{f}_1+\lambda q}e^{\pr{(2q-1)\pp{f}_1+\lambda q}t}\int_0^t\mathbb{E}\pp{\abs{e(l)}^q}dl,
\end{equation}for all $t\geq 0$.
\end{enumerate}
\end{proposition}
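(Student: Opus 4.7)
All four estimates follow a unified template: apply It\^o's formula for c\`adl\`ag jump processes to a suitable power of the state (or of a state-difference), take expectations so that the compensated Poisson martingale vanishes, invoke Assumption \ref{Ass_gen} (the Lipschitz/boundedness conditions for assertions 1--3, the 1-Lipschitz contraction for assertion 4) to reduce the resulting expression to a linear, possibly inhomogeneous, differential inequality, and close with Gr\"onwall. The only genuinely non-trivial work is the bookkeeping of constants.

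For \eqref{EstimX1-X2}, set $D(t):=\bar X_1(t)-\bar X_2(t)$ and apply It\^o to $\abs{D}^2$. The drift term $2\scal{D,f^+(\bar X_1,\bar u,e)-f^+(\bar X_2,\bar u,e)}$ is dominated by $2[f]_1\abs{D}^2$ via Cauchy--Schwarz and Lipschitz, while the compensated jump integrand, after expansion and use of $\abs{g^+(\bar X_1,\bar u,e,y)-g^+(\bar X_2,\bar u,e,y)}\leq[g]_1(y)\abs{D}$, is controlled by $\pr{2[g]_1(y)+[g]_1^2(y)}\abs{D}^2$; Gr\"onwall yields the bound. For \eqref{CompactEstim}, apply It\^o to $\phi(x)=\abs{x}^4$: the bounded drift produces $4\norm{f}_0\abs{\bar X^+}^3\leq 3\abs{\bar X^+}^4+\norm{f}_0^4$ (Young with conjugate exponents $4/3$ and $4$), and the compensated jump, via a convexity bound of the type $(a+b)^4\leq C\pr{a^4+b^4}$ together with Young splits of the mixed-power terms, is controlled by a multiple of $\abs{\bar X^+}^4$ plus the forcing $\norm{f}_0^4+\int_{\mathbb R}\norm{g}_0^4(y)\mathbb{P}_{C_1}(dy)$; an explicit tuning of these constants yields the factor $5^4$. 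For \eqref{EstimX+-X}, set $D(t):=\bar X^+(t)-\bar X(t)$; the novelty is that $f^+(\bar X^+,\bar u,e)-f(\bar X,\bar u)=f(\bar X^++e,\bar u)-f(\bar X,\bar u)$ has norm $\leq[f]_1(\abs{D}+\abs{e})$ and the jump increment $e+g(\bar X^++e,\bar u,y)-g(\bar X,\bar u,y)$ has norm $\leq(1+[g]_1(y))\abs{e}+[g]_1(y)\abs{D}$, so expanding $\abs{D+\text{jump}}^2-\abs{D}^2$ and applying Young's inequality $2ab\leq a^2+b^2$ systematically separates the $\abs{D}^2$ and $\abs{e}^2$ contributions; the inhomogeneous Gr\"onwall inequality that results has forcing $\int_0^t\E{\abs{e(l)}^2}dl$.

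Assertion 4 exploits the additional hypothesis that $\bar x\mapsto \bar x+g(\bar x,\bar u,y)$ is 1-Lipschitz. For \eqref{EstimX1-X2_q}, the post-jump difference
\begin{equation*}
\pr{\bar X_1^++e+g(\bar X_1^++e,\bar u,y)}-\pr{\bar X_2^++e+g(\bar X_2^++e,\bar u,y)}
\end{equation*}
has norm $\leq\abs{D}$ by the contraction applied to the endpoints $\bar X_1^++e$ and $\bar X_2^++e$, so the compensated jump contribution to $\E{\abs{D}^q}$ is non-positive; It\^o applied to $\abs{D}^q$ (with $\nabla\abs{D}^q=q\abs{D}^{q-2}D$ on $\set{D\neq 0}$ for $q\geq 2$) leaves only the drift $q[f]_1\E{\abs{D}^q}$, and Gr\"onwall closes. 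For \eqref{EstimX+-xg}, the contraction instead yields post-jump distance $\leq\abs{D+e}\leq\abs{D}+\abs{e}$; the drift is bounded by $q[f]_1\abs{D}^{q-1}(\abs{D}+\abs{e})$ and the Young split $\abs{D}^{q-1}\abs{e}\leq\tfrac{q-1}{q}\abs{D}^q+\tfrac{1}{q}\abs{e}^q$ produces the exponent $(2q-1)[f]_1$, while a similar Taylor-plus-Young treatment of $(\abs{D}+\abs{e})^q-\abs{D}^q$ generates the $\lambda q$ contributions on both $\abs{D}^q$ and $\abs{e}^q$. The main technical obstacle lies precisely in this constant bookkeeping (already in \eqref{EstimX+-X} and compounded in \eqref{EstimX+-xg}): the Young splits must be tuned so as to route all $[g]_1(y)$-weighted cross terms into the $\abs{D}$-coefficient and all $(1+[g]_1(y))$-weighted ones into the $\abs{e}$-coefficient, in order that the stated exponents and prefactors appear exactly as written.
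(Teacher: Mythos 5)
Your proposal follows the same route as the paper: It\^o's formula applied to the appropriate power of the state or of a state-difference, absorption of the drift and compensated-jump contributions via the Lipschitz/boundedness hypotheses of Assumption~\ref{Ass_gen} (and the $1$-Lipschitz contraction for assertion~4), followed by Gr\"onwall. The only caveat concerns your explicit split for the drift in \eqref{CompactEstim}: the unweighted bound $4\norm{f}_0\abs{\bar X^+}^3\leq 3\abs{\bar X^+}^4+\norm{f}_0^4$ places a coefficient $3$ in front of $\abs{\bar X^+}^4$, which cannot be reconciled with the stated Gr\"onwall exponent $\tfrac{\lambda+1}{2}$ since $\lambda<1$; a weighted Young inequality of the form $4\abs{x}^3\abs{f}\leq \tfrac{1}{2}\abs{x}^4+5^4\abs{f}^4$ (and similarly $(\abs{x}+1+g)^4\leq \tfrac{3}{2}\abs{x}^4+5^4(1+g)^4$ for the jump) is what produces both the $\tfrac{\lambda+1}{2}$ in the exponent and the $5^4$ prefactor. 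You already flagged that the constants must be tuned, and the rest of the bookkeeping (the post-jump contraction killing the jump term for \eqref{EstimX1-X2_q}, the $\abs{D}^{q-1}\abs{e}$ Young split for the $(2q-1)\pp{f}_1$ coefficient) agrees with the paper.
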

The proof is quite standard; for our readers' sake, the details are postponed to the Appendix.
 \subsection{General Running Cost Problems and Their Linear Formulations}\label{Subsec3.3}
To an initial configuration $a_0\in\mathbb{R}$ and $\bar{x}_0\in\mathbb{R}^m$ and an admissible control $\bar{u}_0\in\bar{U}$, we associate the \emph{(expectation of the) occupation measure} $\gamma^{a_0,\bar{x}_0,\bar{u}_0}\in\mathcal{P}\pr{\mathbb{R}\times\mathbb{R}^m\times \bar{U}},$
 \begin{equation}
 \gamma^{a_0,\bar{x}_0,\bar{u}_0}(da,d\bar{x},d\bar{u}):=\mathbb{E}\pp{\int_0^\infty e^{-t}\mathbf{1}_{\set{a^{a_0}(t)\in da,\ \bar{X}^{\bar{x}_0,\bar{u}}(t)\in d\bar{x},\ \bar{u}_0(t)\in d\bar{u}}}dt},
 \end{equation}for all Borel sets $da\subset\mathbb{R},\ d\bar{x}\subset\mathbb{R}^m,\ d\bar{u}\subset\bar{U}$.\\
We assume that $\lambda< 1$, otherwise one needs to change the discount parameter to a quantity slightly larger than $\max\set{1,\lambda}$, but without affecting the reasoning.\\
The family of such occupation measures is denoted by $\Gamma\pr{a_0,\bar{x}_0}$. It can be seen as a subset of the family\begin{equation}\label{Theta}
 \begin{split}
 &\Theta\pr{a_0,\bar{x}_0}:=\\
 \bigg\{ &\gamma\in \mathcal{P}\pr{\mathbb{R}^{m+1}\times\bar{U}}:\ \forall \phi\in C_b^1\pr{\mathbb{R}^{m+1}},\\
&\begin{split}\phi\pr{a_0,\bar{x}_0}+\int_{\mathbb{R}^{m+1}\times \bar{U}}\bigg[ &-\phi(a,\bar{x})+\scal{\begin{pmatrix}
 -ha\\ f\pr{\bar{x},\bar{u}}
 \end{pmatrix},\nabla \phi(a,\bar{x})}\\&+\lambda\int_{\mathbb{R}}\pr{\phi\pr{a,\bar{x}+g\pr{\bar{x},\bar{u},y}}-\phi(a,\bar{x})}\mathbb{P}_{C_1}(dy)\bigg]\gamma(da,d\bar{x},d\bar{u})=0; \end{split}\\
 &Supp\ {\gamma}\subset\pp{-\abs{a_0},\abs{a_0}}\times \mathbb{R}^m\times \bar{U};\\
& \int_{\mathbb{R}^{m+1}\times \bar{U}}\abs{\bar{x}}^4\gamma\pr{da,d\bar{x},d\bar{u}}\leq \Db{\pr{\frac{2}{1-\lambda}}^2}\pp{\abs{\bar{x}_0}^4+5^4\pr{\norm{f}_0^4+\int_{\mathbb{R}}\norm{g}_0^4(y)\mathbb{P}_{C_1}(y)}}
 \bigg\}.
 \end{split}
 \end{equation}
 \Db{The equality constraint is nothing else than It\^{o}'s formula; the moment condition directly follows from the inequality in Proposition 2, assertion 3.
 Indeed, \begin{align*}&\int_{\mathbb{R}^{m+1}\times\bar{U}}\abs{\bar{x}}^4\gamma^{a_0,\bar{x}_0,\bar{u}_0}(da,d\bar{x},d\bar{u})=\int_0^\infty e^{-t}\mathbb{E}\pp{\abs{\bar{X}^{\bar{x}_0,\bar{u}_0}(t)}^4}dt\\ \leq &\abs{\bar{x}_0}^4\int_0^\infty e^{\frac{\lambda-1}{2}}dt+5^4\pr{\norm{f}_0^4
+\int_{\mathbb{R}}\norm{g}_0^4(y)\mathbb{P}_{C_1}(dy)}\int_0^\infty e^{\frac{\lambda-1}{2}t}tdt\\
&=\frac{2}{1-\lambda}\abs{\bar{x}_0}^4+\pr{\frac{2}{1-\lambda}}^25^4\pr{\norm{f}_0^4
+\int_{\mathbb{R}}\norm{g}_0^4(y)\mathbb{P}_{C_1}(dy)}.\end{align*}}
 Let us further emphasize that the uniform bound on the fourth-order moments on the $x$ component, together with the boundedness of $a$ imply the compactness with respect to the $2$-Wasserstein metric.\\ 
 {\begin{remark} The compactness of $U$ (together with the boundedness of $a$) plays a role in the sense that we do not impose a condition on the fourth moment in $u$. The case of unbounded controls can be treated similarly to the unboundness on the trajectory component $x$. \\
Concerning the compactness in $\mathcal{P}_2$,  this can be inferred from \cite[Theorem 5.5, Page 358]{CD1_2018}.
The only element needed is the uniform integrability in $\mathbb{L}^2$ (see (5.16) in the cited reference) which is valid, for instance, it the $\mathbb{L}^{2+\delta}$-moment is uniformly bounded for the family of measures. Ultimately compactness comes from Prohorov's theorem and tightness of the family of measures (see also \cite[Corollary 8]{goreac_serea_ZubovPDMP}), and the moment bound implies this tightness (when u and a live on compacts).
 \end{remark}}
 We remind the following useful characterization of $\Theta$ which synthesizes and details certain results in \cite{goreac_serea_ZubovPDMP}.
 \begin{theorem}
 \label{ThGaclco}
 We suppose Assumption \ref{Ass_gen} to hold true and $h>0$. Let $\pr{a_0,\bar{x}_0}\in \mathbb{R}_+\times \mathbb{R}^m$ be fixed. 
 \begin{enumerate}
 \item If $\Theta_1\pr{a_0,\bar{x}_0}$ denotes the space marginals of measures in $\Theta\pr{a_0,\bar{x}_0}$, i.e. \[\Theta_1\pr{a_0,\bar{x}_0}:=\set{\gamma\pr{\cdot,\bar{U}}:\ \gamma\in\Theta\pr{a_0,\bar{x}_0}},\]
 then \begin{equation}
 \label{EqThGaclco}
 \Theta_1\pr{a_0,\bar{x}_0}=\bar{co}\set{\gamma\pr{\cdot,\bar{U}}:\ \gamma\in\Gamma\pr{a_0,\bar{x}_0}}.
 \end{equation}
 Here, $\bar{co}$ denotes the Kuratowski closure with respect to the usual (weak) convergence of probability measures applied to the convex hull.  This closure can also be taken in the Wasserstein space $\mathcal{P}_2\pr{\mathbb{R}^{m+1}}$ with the classical $\mathcal{W}_2$ distance.
 \item If $\phi$ is a real-valued bounded uniformly continuous function on $\mathbb{R}^{m}$, then the following equalities hold true.
\begin{equation}
 \label{Eqinfsup}
 \begin{split}
& a_0^q\inf_{\bar{u}\in\bar{\mathcal{U}}}\mathbb{E}\pp{\int_{0}^\infty e^{-(1+qh)t}\phi\pr{\bar{X}^{\bar{x}_0,\bar{u}}(t)}dt}=\inf_{\gamma\in\Theta_1\pr{a_0,\bar{x}_0}} \int_{\mathbb{R}^{m+1}}a^q\phi(\bar{x})\gamma(da,d\bar{x})\\
&\begin{split}=\frac{a_0^q}{1+qh}\sup\bigg\{& \kappa\in\mathbb{R}:\ \exists \psi\in C_b^1\pr{\mathbb{R}^{m}} \textnormal{ such that } \forall \pr{\bar{x},\bar{u}}\in \mathbb{R}^{m}\times \bar{U}
,\\&\kappa\leq \mathcal{L}^{\bar{u}}\psi(\bar{x})+\phi(\bar{x})+(1+qh)\pr{\psi(\bar{x}_0)-\psi(\bar{x})},\bigg\}
\end{split}
 \end{split}
\end{equation}where the infinitesimal generator is given by 
\begin{equation}
\label{Lu}
\begin{split}
\mathcal{L}^{\bar{u}}\psi(\bar{x}):=\scal{f\pr{\bar{x},\bar{u}}
 ,\nabla \psi(\bar{x})}+\lambda\int_{\mathbb{R}}\pr{\psi\pr{\bar{x}+g\pr{\bar{x},\bar{u},y}}-\psi(\bar{x})}\mathbb{P}_{C_1}(dy).
 \end{split}
\end{equation}  
\item Furthermore, the test functions $\psi$ in the dual (supremum) formulation can be chosen lower bounded i.e.  \[\psi(\cdot)\geq \frac{1}{1+qh}\inf_{\bar{x}\in\mathbb{R}^{m}}\phi(\bar{x}). \]\end{enumerate}
 \end{theorem}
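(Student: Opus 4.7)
The plan is to address the three assertions in order, leveraging the infinitesimal martingale framework for the piecewise deterministic Markov process $(\gamma,a,\bar{X})$ described in the preceding remark. For Assertion 1, the inclusion $\Gamma\pr{a_0,\bar{x}_0}\subset\Theta\pr{a_0,\bar{x}_0}$ is obtained by applying It\^{o}'s formula to $t\mapsto e^{-t}\phi\pr{a^{a_0}(t),\bar{X}^{\bar{x}_0,\bar{u}}(t)}$ for $\phi\in C_b^1\pr{\mathbb{R}^{m+1}}$, integrating on $\mathbb{R}_+$, and taking expectations; the compensated martingale contributions vanish thanks to Assumption \ref{Ass_gen}.4 combined with the fourth-moment bound \eqref{CompactEstim}, while the support and moment conditions in the definition of $\Theta$ follow from $a^{a_0}(t)\in[0,a_0]$ and from \eqref{CompactEstim} integrated against $e^{-t}dt$. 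Taking space marginals and convex hull closure yields one inclusion in \eqref{EqThGaclco}. The reverse inclusion invokes the linearization/mimicking machinery of \cite{goreac_serea_ZubovPDMP} applied to the lifted PDMP, which realizes every element of the right-hand side of \eqref{EqThGaclco} as a limit of occupation measures associated to admissible controls. To upgrade the closure from weak convergence to $\mathcal{W}_2$, one combines Prohorov tightness (inherited from the bounded fourth moment in $\bar{x}$ and the compactness of the $a,\bar{u}$ components) with the uniform-integrability characterization in \cite[Theorem 5.5]{CD1_2018}.

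For Assertion 2, since $a^{a_0}(t)^q=a_0^q e^{-qht}$ one has
\[a_0^q\mathbb{E}\pp{\int_0^\infty e^{-(1+qh)t}\phi\pr{\bar{X}^{\bar{x}_0,\bar{u}}(t)}dt}=\int a^q\phi(\bar{x})\,\gamma^{a_0,\bar{x}_0,\bar{u}}(da,d\bar{x},d\bar{u}),\]
so the infimum over $\bar{\mathcal{U}}$ equals the infimum over $\Gamma$, which by Assertion 1 and linearity of the objective in $\gamma$ equals the infimum over $\Theta_1$. For the dual equality, weak duality proceeds via a test-function trick specific to this formulation: substituting $\phi(a,\bar{x})=a^q$ into the constraint defining $\Theta$ yields $\int a^q\,d\gamma=\frac{a_0^q}{1+qh}$, and substituting $\phi(a,\bar{x})=a^q\psi(\bar{x})$ yields
\[a_0^q\psi(\bar{x}_0)=\int a^q\pr{(1+qh)\psi(\bar{x})-\mathcal{L}^{\bar{u}}\psi(\bar{x})}\gamma(da,d\bar{x},d\bar{u}).\]
Multiplying the pointwise inequality $\kappa\leq\mathcal{L}^{\bar{u}}\psi+\phi+(1+qh)(\psi(\bar{x}_0)-\psi)$ by $a^q\geq 0$, integrating against $\gamma\in\Theta$, and combining the two identities precisely cancels the $\psi(\bar{x}_0)$ contribution and delivers the weak duality inequality $\frac{a_0^q\kappa}{1+qh}\leq\int a^q\phi\,d\gamma$. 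Strong duality is the main technical obstacle; I would establish it through a Sion/Hahn--Banach separation argument applied to the pairing of $C_b^1$ test functions against $\Theta_1$, using the $\mathcal{W}_2$-compactness from Assertion 1 to secure an optimal measure and standard mollification/approximation to keep the candidate $\psi$ in $C_b^1\pr{\mathbb{R}^m}$.

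Finally, Assertion 3 follows from a translation argument: the constraint in the dual is invariant under additive constants added to $\psi$, since $\mathcal{L}^{\bar{u}}$ annihilates constants and the difference $\psi(\bar{x}_0)-\psi(\bar{x})$ is itself translation-invariant. Given any feasible $\psi\in C_b^1\pr{\mathbb{R}^m}$, the infimum $m:=\inf_{\bar{x}\in\mathbb{R}^m}\psi(\bar{x})$ is finite by boundedness, and the shifted function $\psi':=\psi-m+\frac{1}{1+qh}\inf_{\bar{x}\in\mathbb{R}^m}\phi(\bar{x})$ lies in $C_b^1$, remains feasible with the same $\kappa$, and satisfies $\psi'\geq\frac{1}{1+qh}\inf\phi$, as required.
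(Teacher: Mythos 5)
Your proof is correct, and while Assertion 1 follows the paper's route (cite \cite[Corollary 8]{goreac_serea_ZubovPDMP} for the basic identification plus the fourth-moment bound from \eqref{CompactEstim} to upgrade weak closure to $\mathcal{W}_2$-closure), you take genuinely different routes for the other two parts. For the weak-duality inequality in Assertion 2, you spell out an explicit cancellation: plugging $\phi(a,\bar{x})=a^q$ (after the obvious truncation, since $\gamma$ is supported in $[-|a_0|,|a_0|]\times\mathbb{R}^m\times\bar{U}$) into the linear constraint of $\Theta$ gives $\int a^q\,d\gamma=\frac{a_0^q}{1+qh}$, and plugging in $\phi(a,\bar{x})=a^q\psi(\bar{x})$ gives $a_0^q\psi(\bar{x}_0)=\int a^q[(1+qh)\psi-\mathcal{L}^{\bar{u}}\psi]\,d\gamma$; combining these with the pointwise constraint multiplied by $a^q$ delivers $\frac{a_0^q\kappa}{1+qh}\leq\int a^q\phi\,d\gamma$. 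The paper instead proves both directions at once by invoking \cite[Theorem 7]{goreac_serea_ZubovPDMP} for the $(1+qh)$-discounted problem; your explicit computation makes the mechanism transparent while your strong-duality step (Sion/Hahn--Banach separation on the compact convex set $\Theta_1$ plus mollification) essentially reconstructs what is already packaged in that cited theorem, so it is a more self-contained but longer path. For Assertion 3, your translation argument is both correct and noticeably more elementary than the paper's appeal to the explicit approximating functions $v_g^\varepsilon$ of \cite[Section~3.3]{goreac_serea_ZubovPDMP}: the feasibility constraint is invariant under $\psi\mapsto\psi+c$, since $\mathcal{L}^{\bar{u}}$ annihilates constants and $\psi(\bar{x}_0)-\psi(\bar{x})$ is shift-invariant, so any feasible $\psi\in C_b^1$ can be translated so that $\inf\psi=\frac{1}{1+qh}\inf\phi$ with the same $\kappa$. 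In short: same scaffolding for Assertion 1, a more explicit and instructive weak-duality computation and a longer strong-duality argument for Assertion 2, and a cleaner, purely algebraic argument for Assertion 3.
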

 The proof is essentially given in \cite[Corollary 8, Theorem 7,  and Section 3.3]{goreac_serea_ZubovPDMP}. For our readers' convenience, we will specify in the Appendix the exact references as well as the novelties (essentially in the $\mathcal{P}_2$ compactness). 
 \section{Relaxed Formulation for the Running Maximum Control Problem}\label{SecMain1}
 \subsection{Formulation and Main Results}
For non-degeneracy issues, let us fix $h>0$ large enough, and consider, for $\bar{x}_0\in\mathbb{R}^m$,
\begin{equation}
\label{V}
V(\bar{x}_0):=\inf_{\gamma\in \Theta_1(1,\bar{x}_0)}\norm{(a,\bar{x})\mapsto aL\pr{\bar{x}}}_{\mathbb{L}^\infty\pr{d\gamma}}.
\end{equation}
\begin{remark}
Note that this is a relaxation formulation for a value function of type
\[\inf_{\bar{u}\in\bar{\mathcal{U}}}\norm{\pr{\omega,t}\mapsto e^{-ht}L\pr{\bar{X}^{\bar{x}_0,\bar{u}}(\omega,t)}}_{\mathbb{L}^\infty\pr{\Omega\times\mathbb{R}_+,\mathbb{P}\otimes e^{-t}dt;\mathbb{R}}}.\]
\end{remark}
We are going to make the following assumption.
\begin{ass}\label{AssgenL}\begin{enumerate}
\item The cost function $L:\mathbb{R}^m\longrightarrow\mathbb{R}_+$ is bounded away from $0$ and upper-bounded by $\norm{L}_0<\infty$, and Lipschitz-continuous with Lipschitz constant $\pp{L}_1$.
\item The discount parameter satisfies
\[h\geq 1+\frac{3\pp{f}_1+\lambda\int_{\mathbb{R}}\pp{1+4\pp{g}_1(y)+2\pp{g}_1^2(y)}\mathbb{P}_{C_1}(dy)}{4}.\]
\end{enumerate}
\end{ass}
\begin{remark}
The denominator is $4$ since we are going to state the continuity results in the approximating $\mathbb{L}^q$ problems with $q\geq 2$. The condition can actually be given with a denominator $2q_0$ and the assertion remain valid with $q\geq q_0$. In particular, the asymptotic behavior does not change. From this point of view, one understands that, in view of our previous assumption that $\lambda\leq 1$,  the condition is actually $h>\max\set{\lambda,1},$ which is reasonable enough.
\end{remark}
We are now able to state the first main theoretical result of the paper providing a dual description for the value function $V$.
\begin{theorem}\label{ThMain}For every $\bar{x}_0\in\mathbb{T}_n\times \mathbb{R}$, the value function $V$ has a dual formulation given as follows.
\begin{equation}
\label{Vdual}
\begin{split}
V\pr{\bar{x}_0}=\sup \bigg\{ \psi(\bar{x}_0):\ &\psi\in C_b^1\pr{\mathbb{R}^{m};\mathbb{R}}, \exists q>1 \textnormal{ s. t. }\forall \pr{\bar{x},u}\in \mathbb{R}^{m},\times \bar{U},\\
0\leq &-\frac{1+hq+\lambda}{q}\psi(\bar{x})+\scal{f\pr{\bar{x},\bar{u}},\nabla \psi(\bar{x})}+\frac{1}{q}\pr{\frac{L\pr{\bar{x}}}{\psi\pr{\bar{x}}}}^q\psi(\bar{x})\\&+\frac{\lambda}{q}\int_{\mathbb{R}}\pp{\frac{\psi\pr{\bar{x}+g\pr{\bar{x},\bar{u},y}}}{\psi\pr{\bar{x}}}}^q\mathbb{P}_{C_1}(dy)\psi\pr{\bar{x}}\ \bigg\}.
\end{split}
\end{equation}
\end{theorem}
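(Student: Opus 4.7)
The plan is to approximate $V$ by the family of $\mathbb{L}^q$-cost value functions $V_q(\bar{x}_0):=\inf_{\gamma\in\Theta_1(1,\bar{x}_0)}\norm{(a,\bar{x})\mapsto aL(\bar{x})}_{\mathbb{L}^q(d\gamma)}$ for $q\geq 2$, to derive a tractable linear dual for each $V_q$ via the framework of Theorem \ref{ThGaclco}, and to pass to the limit $q\to\infty$ using the monotone convergence $V=\sup_{q>1}V_q$ announced as Proposition \ref{PropV=supVq}.

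First, I would raise to the $q$-th power and commute with the infimum by monotonicity of $x\mapsto x^{1/q}$ on $\mathbb{R}_+$, obtaining $V_q(\bar{x}_0)^q=\inf_\gamma\int a^q L(\bar{x})^q\,d\gamma$. Theorem \ref{ThGaclco}, assertion 2 (with $a_0=1$ and $\phi=L^q$), together with the reparametrization $\kappa=(1+qh)\Phi(\bar{x}_0)$, then yields the linear dual
\[V_q(\bar{x}_0)^q=\sup\set{\Phi(\bar{x}_0):\Phi\in C_b^1(\mathbb{R}^m),\ (1+qh)\Phi(\bar{x})\leq L(\bar{x})^q+\mathcal{L}^{\bar{u}}\Phi(\bar{x}),\ \forall(\bar{x},\bar{u})},\]
and by assertion 3 one may further restrict to $\Phi\geq\frac{\inf L^q}{1+qh}>0$, thanks to $L$ being bounded away from zero.

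The key algebraic step is then the nonlinear substitution $\Phi=\psi^q$ with $\psi>0$. Using the identity $\mathcal{L}^{\bar{u}}\psi^q=q\psi^{q-1}\scal{f,\nabla\psi}+\lambda\int\pp{\psi(\bar{x}+g)^q-\psi^q}\mathbb{P}_{C_1}(dy)$ and dividing the subsolution inequality $(1+qh)\psi^q\leq L^q+\mathcal{L}^{\bar{u}}\psi^q$ by $q\psi^{q-1}>0$, I recover exactly the inequality displayed in \eqref{Vdual}. Strict positivity of $\Phi$ ensures that $\psi=\Phi^{1/q}$ stays in $C_b^1$, and the map $\Phi\leftrightarrow\psi$ is a bijection on positive bounded $C_b^1$ functions; taking $q$-th roots of $V_q(\bar{x}_0)^q=\sup\Phi(\bar{x}_0)=\sup\psi(\bar{x}_0)^q$ therefore expresses $V_q$ in the form appearing on the right-hand side of \eqref{Vdual} at fixed $q$. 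Taking the supremum over $q>1$ and invoking $V=\sup_q V_q$ closes the argument.

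The hard part will be the limit identity $V=\sup_q V_q$. While the monotonicity $V_q\leq V_{q'}\leq V$ for $q\leq q'$ (since $\Theta_1(1,\bar{x}_0)$ consists of probability measures) and the pointwise convergence $\norm{aL}_{\mathbb{L}^q(d\gamma)}\nearrow\norm{aL}_{\mathbb{L}^\infty(d\gamma)}$ are routine, exchanging this limit with the infimum over $\gamma$ requires selecting near-optimal $\gamma_q$ for each $V_q$, extracting a $\mathcal{W}_2$-convergent subsequence via the $\mathcal{P}_2$-compactness of $\Theta_1(1,\bar{x}_0)$ granted by the fourth-moment bound in \eqref{Theta}, and concluding by lower semicontinuity of $\gamma\mapsto\norm{aL}_{\mathbb{L}^\infty(d\gamma)}$. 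This is precisely the content of Proposition \ref{PropV=supVq}, which I would invoke here as a black box; a secondary technical subtlety is preserving $C_b^1$-regularity under $\psi=\Phi^{1/q}$, handled by the strict positivity secured in Theorem \ref{ThGaclco}, assertion 3.
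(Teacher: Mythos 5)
Your proposal follows the same two-step macro strategy as the paper: approximate $V$ by the $\mathbb{L}^q$ value functions $V_q$, dualize each $V_q$, and pass to the limit via Proposition \ref{PropV=supVq}. Your algebraic transformation $\Phi=\psi^q$ (equivalently $\psi=\Phi^{1/q}$) between the linear $\Phi$-subsolution inequality $(1+qh)\Phi\leq L^q+\mathcal{L}^{\bar{u}}\Phi$ and the nonlinear inequality displayed in \eqref{Vdual} is exactly the right identity, and your reparametrization $\Phi=\psi-\psi(\bar{x}_0)+\kappa/(1+qh)$ correctly turns the $\kappa$-formulation of Theorem \ref{ThGaclco} into a supremum of $\Phi(\bar{x}_0)$. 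However, the route you take to the nonlinear dual of $V_q$ differs from the paper's: the paper (proof of Theorem \ref{PropTool1}, Assertion 3) does \emph{not} deduce the nonlinear formula from the linear one by a change of variable; it constructs the mollified shaken value functions $V_{q,\varepsilon}^+=\bigl(v_q^{+,\varepsilon}\star\rho_\varepsilon\bigr)^{1/q}$ explicitly, verifies that each is a $C_b^1$ test function satisfying \eqref{SubsolHJB'} directly, observes they are bounded below by $\inf L>0$ by construction, and lets $\varepsilon\to0+$. Your route is slicker algebraically but shifts the burden onto a positivity claim.

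That positivity claim is where you gloss over a genuine detail. Assertion 3 of Theorem \ref{ThGaclco} gives a lower bound $\psi\geq\frac{1}{1+qh}\inf\phi$ on the $\psi$ appearing in the \emph{$\kappa$-formulation} \eqref{Eqinfsup}, not on the reparametrized $\Phi=\psi-\psi(\bar{x}_0)+\kappa/(1+qh)$; the two lower bounds do not coincide unless one knows $\psi(\bar{x}_0)\leq\kappa/(1+qh)$, which is not automatic from the constraints. Either one argues as the paper does (the near-optimal $\psi$ in Assertion 3 are in fact the very mollified shaken functions, so that $\Phi\approx\psi\approx V_q^q\geq(\inf L)^q>0$, i.e., the reparametrization is asymptotically the identity on the optimizers), or one shows independently that the $\Phi$-supremum can be restricted to $\Phi\geq\frac{(\inf L)^q}{1+qh}$, e.g., by a truncation $\Phi\mapsto\max\{\Phi,\text{const}\}$ plus smoothing (the constant cut-off preserves the subsolution inequality since the jump operator is monotone and $\nabla$ vanishes on the plateau). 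As written, "by assertion 3 one may further restrict to $\Phi\geq\frac{\inf L^q}{1+qh}$" is not literally supported by the statement you cite. Once this is patched, your argument is a correct alternative derivation of Theorem \ref{PropTool1}, Assertion 3, and the remainder of your proposal (invoking Proposition \ref{PropV=supVq} to take the supremum over $q$) matches the paper.
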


\begin{proof}The assertion follows from the $\sup$ characterization of $V$ given in Proposition \ref{PropV=supVq} and the dual characterizations of $V_q$ exhibited in Theorem \ref{PropTool1}.
\end{proof}

{We wish to note that the right-hand part of the inequality in \eqref{Vdual} is tailor-made in order to make $q\rightarrow\infty$. While we could have applied the characterization in \eqref{Eqinfsup} directly, this would lead to a maximization of $\kappa^{\frac{1}{q}}$ which heuristically converges to a trivial quantity, and involve, on the right-hand, $\phi=L^q$ which may diverge. It is for this reason that our characterization uses a slightly different formulation.}

The remaining of the section is devoted to the proof of Theorem \ref{ThMain} in several steps.
\subsection{Step 1. The $\mathbb{L}^q$ problems}\label{SubsecStep1}
As an essential maximum, it is usual to approximate $V$ via $\mathbb{L}^q$ problems  considering, for $q>0$, and $\bar{x}_0\in\mathbb{R}^m$,
\begin{equation}
\label{VN}
V_q(\bar{x}_0):=\inf_{\bar{u}\in\bar{\mathcal{U}}}\pr{\mathbb{E}\pp{\int_0^{\infty}e^{-(1+qh)t}L^q\pr{\bar{X}^{\bar{x}_0,\bar{u}}(t)}dt}}^{\frac{1}{q}}.
\end{equation}
In connection to these value functions, we introduce the integro-differential Hamilton-Jacobi equation
\begin{equation}
\label{HJBq}
\begin{cases}
&(1+qh)W(\bar{x})-L^q(\bar{x})+H\pr{\bar{x},\nabla W(\bar{x}),W}=0,\textnormal{ where}\\
&H(\bar{x},\theta,\psi):=\sup_{\bar{u}\in \bar{U}}\set{-\scal{f(\bar{x},\bar{u}),\theta}-\lambda\int_{\mathbb{R}}\pp{\psi\pr{\bar{x}+g\pr{\bar{x},\bar{u},y}}-\psi(\bar{x})}\mathbb{P}_{C_1}(dy)},
\end{cases}
\end{equation}
for $\theta\in\mathbb{R}^m$ and $\psi$ being a bounded measurable real-valued function on $\mathbb{R}^m$.

\begin{theorem}\label{PropTool1}
Let us enforce Assumptions \ref{Ass_gen} and \ref{AssgenL}. The following assertions hold true, for every $q\geq 2$.
\begin{enumerate}
\item $V_q^q$ is the unique bounded and uniformly continuous viscosity solution to \eqref{HJBq}. 
\item The sequence $(V_q)_{q\geq 2}$ is non-decreasing and bounded. Furthermore, $V_q\geq \inf_{\bar{x}\in\mathbb{R}^m}L(\bar{x})>0$.
\item The following relaxed and dual formulations are valid.\begin{equation}
\label{dualV_q}
\begin{split}
V_q(\bar{x}_0)=\inf\bigg\{&\norm{\pr{a,\bar{x}}\mapsto aL(\bar{x})}_{\mathbb{L}^q\pr{d\gamma}}:\  \gamma\in\Theta_1(1,\bar{x}_0)\bigg\}\\=\sup\bigg\{&\psi(\bar{x}_0):\ \psi\in C_b^1\pr{\mathbb{R}^{m};\mathbb{R}_+}, \textnormal{ s.t. }\forall (\bar{x},\bar{u})\in \mathbb{R}^m\times\bar{U},\\
&0\leq -\frac{1+hq+\lambda}{q}\psi(\bar{x})+\scal{f\pr{\bar{x},\bar{u}},\nabla \psi(\bar{x})}+\frac{1}{q}\pr{\frac{L\pr{\bar{x}}}{\psi\pr{\bar{x}}}}^q\psi(\bar{x})\\&+\frac{\lambda}{q}\int_{\mathbb{R}}\pp{\frac{\psi\pr{\bar{x}+g\pr{\bar{x},\bar{u},y}}}{\psi\pr{\bar{x}}}}^q\mathbb{P}_{C_1}(dy)\psi\pr{\bar{x}}\bigg\}.
\end{split}
\end{equation}
\end{enumerate}
\end{theorem}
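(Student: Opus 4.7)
The plan is to treat the three assertions in sequence, with Assertion 3 carrying the main content and Assertions 1--2 serving as preparation. For Assertion 1, $V_q^q$ is the value of a standard discounted infinite-horizon PDMP control problem with bounded, uniformly continuous running cost $L^q$ (uniform continuity of $L^q$ follows from the Lipschitz-boundedness of $L$, with Lipschitz constant of order $q\norm{L}_0^{q-1}[L]_1$). Dynamic programming together with the trajectory estimate of Proposition \ref{PropEstimTraj} assertion 1 yields a uniform modulus of continuity for $V_q^q$; the Hamilton--Jacobi integro-differential equation \eqref{HJBq} is then derived in the usual way, and a comparison principle for bounded viscosity sub/supersolutions of the non-local Hamiltonian gives uniqueness. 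These arguments follow \cite{goreac_serea_ZubovPDMP}, justifying their relegation to the Appendix. For Assertion 2, I would fix $\bar{u}$ and rewrite the cost as $\norm{(a,\bar{x})\mapsto aL(\bar{x})}_{\mathbb{L}^q(\gamma^{1,\bar{x}_0,\bar{u}})}$ with $\gamma^{1,\bar{x}_0,\bar{u}}$ a probability measure; monotonicity of $\mathbb{L}^q$ norms on probability spaces in $q$ then gives a pointwise-in-$\bar{u}$ monotonicity that is preserved under the infimum. Boundedness above by $\norm{L}_0$ is immediate, and strict positivity follows from $L\geq \inf L>0$, yielding a positive lower bound of the form $(\inf L)(1+qh)^{-1/q}>0$.

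For Assertion 3, the \emph{primal} equality is obtained by directly applying Theorem \ref{ThGaclco}, assertion 2, with $a_0=1$ and $\phi=L^q$: this gives $V_q^q(\bar{x}_0)=\inf_{\gamma\in\Theta_1(1,\bar{x}_0)}\int a^q L^q(\bar{x})\,\gamma(da,d\bar{x})=\inf_\gamma \norm{(a,\bar{x})\mapsto aL(\bar{x})}_{\mathbb{L}^q(d\gamma)}^q$, so taking $q$-th roots yields the first equality in \eqref{dualV_q}. For the \emph{dual} equality, I would start from the sup-characterization in \eqref{Eqinfsup} combined with assertion 3 of the same theorem, which permits restricting attention to test functions $\psi\in C_b^1$ satisfying $\psi\geq \frac{(\inf L)^q}{1+qh}>0$. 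The key step is then the nonlinear substitution $\psi=W^q$ with $W>0$: since $\nabla\psi=qW^{q-1}\nabla W$ and $\psi(\bar{x}+g(\bar{x},\bar{u},y))=W(\bar{x}+g(\bar{x},\bar{u},y))^q$, the subsolution inequality $0\leq \mathcal{L}^{\bar{u}}\psi+L^q-(1+qh)\psi$ can be divided by $qW^{q-1}$ and rearranged into exactly the displayed inequality in \eqref{dualV_q}. The value at $\bar{x}_0$ transforms as $\psi(\bar{x}_0)=W(\bar{x}_0)^q$, so the sup over $\psi(\bar{x}_0)$ equals $V_q^q(\bar{x}_0)$ if and only if the sup over $W(\bar{x}_0)$ equals $V_q(\bar{x}_0)$.

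The main technical obstacle is making this bijection $\psi\leftrightarrow W=\psi^{1/q}$ work inside the class of admissible test functions. One needs the strict lower bound $\psi\geq\delta>0$ (provided by assertion 3 of Theorem \ref{ThGaclco}) together with $\psi\in C_b^1$ to guarantee that $W=\psi^{1/q}$ is again $C_b^1$ and strictly positive; conversely, any $W\in C_b^1$ with $W>0$ yields a valid $\psi=W^q\in C_b^1$. Once this correspondence is established, the two supremum characterizations coincide, concluding the proof of Theorem \ref{PropTool1}. The same substitution is also what makes the $q\to\infty$ limit transition tractable, anticipating the use in Proposition \ref{PropV=supVq}.
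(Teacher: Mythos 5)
Your treatment of Assertions 1--2 and of the primal and ``easy'' direction of Assertion 3 matches the paper (uniform continuity via the first trajectory estimate; monotonicity via H\"older applied to the occupation measure; the primal equality from Theorem \ref{ThGaclco}, assertion 2 with $\phi=L^q$; the inequality $\sup\psi(\bar x_0)\le V_q(\bar x_0)$ by testing the linear constraint with $\phi(a,\bar x)=a^q\psi^q(\bar x)$). Your algebra for the substitution $\psi=W^q$ is also correct and recovers exactly the displayed Hamiltonian in \eqref{dualV_q}.

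The gap is in the converse inequality. You propose to obtain it from the $\kappa$--supremum in \eqref{Eqinfsup}, but that constraint reads
\[
\kappa\le \mathcal L^{\bar u}\psi(\bar x)+L^q(\bar x)+(1+qh)\bigl(\psi(\bar x_0)-\psi(\bar x)\bigr),
\]
not $0\le\mathcal L^{\bar u}\psi+L^q-(1+qh)\psi$. Passing from one form to the other requires the additive normalization $\tilde\psi:=\psi+\bigl(\tfrac{\kappa}{1+qh}-\psi(\bar x_0)\bigr)$, which restores $\tilde\psi(\bar x_0)=\kappa/(1+qh)$ but may destroy the lower bound $\psi\ge\tfrac{(\inf L)^q}{1+qh}$ furnished by assertion 3 (the shift has sign $\tfrac{\kappa}{1+qh}-\psi(\bar x_0)$, which assertion 3 does not control). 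Equivalently, what you actually need is a family of \emph{classical, globally positive} $C^1_b$ subsolutions of $(1+qh)\psi\le\mathcal L^{\bar u}\psi+L^q$ whose values at $\bar x_0$ approach $V_q^q(\bar x_0)$; Theorem \ref{ThGaclco} asserts neither that the near-optimal $\psi$ in the $\kappa$-formulation satisfies $\psi(\bar x_0)\to V_q^q(\bar x_0)$, nor that it can be normalized while staying positive. This is precisely the point the paper handles constructively: it defines the shaken value functions $V_q^{+,\varepsilon}$ in \eqref{v+}, shows via the second estimate of Proposition \ref{PropEstimTraj} that $v_q^{+,\varepsilon}=(V_q^{+,\varepsilon})^q$ converges uniformly to $v_q=V_q^q$, mollifies to obtain $v_{q,\varepsilon}^+=v_q^{+,\varepsilon}\star\rho_\varepsilon$, checks that these are classical subsolutions \eqref{SubsolHJB}, and only then takes $q$-th roots (legitimate because $v_{q,\varepsilon}^+$ is manifestly bounded below by a positive constant, being a convolution of a value function with running cost $L^q\ge(\inf L)^q>0$). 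To make your route rigorous you would in effect have to unfold the proof of assertion 3 of Theorem \ref{ThGaclco}, which brings you back to the same shaking--convolution construction; it is cleaner to perform it directly as the paper does.

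A minor point: your lower bound $V_q\ge(\inf L)(1+qh)^{-1/q}$ is the sharp one (since $\int a^q\,d\gamma=1/(1+qh)$ for every $\gamma\in\Theta(1,\bar x_0)$, which follows from the linear constraint with $\phi(a,\bar x)=a^q$); this is finer than what is actually needed, namely a positive lower bound uniform over the relevant range of $q$.
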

\begin{proof}[Sketch of the proof of Theorem \ref{PropTool1}]
\begin{enumerate}
\item Let us only sketch the uniform continuity for $q\geq 2$ fixed in order to understand the lower bound assumption on $h$.  One begins with noting that $\abs{L^q(\bar{x})-L^q(\bar{y})}\leq q\norm{L}_0^{q-1}\pp{L}_1\abs{\bar{x}-\bar{y}}$.  Then, if $\bar{u}$ is a fixed admissible control and $\bar{x}_1,\bar{x}_2\in\mathbb{R}^m$, and the associated trajectories are denoted by $\bar{X}_k:=\bar{X}^{\bar{x}_k,\bar{u}}$, with $k\in\set{1,2}$, then, one gets
\begin{align*}
&\abs{\mathbb{E}\pp{\int_0^\infty e^{-(1+qh)t}L^q\pr{\bar{X}_1(t)}dt}-\mathbb{E}\pp{\int_0^\infty e^{-(1+qh)t}L^q\pr{\bar{X}_2(t)}}dt}\\&\leq q\norm{L}_0^{q-1}\pp{L}_1\int_0^\infty e^{-(1+qh)t}\mathbb{E}\pp{\abs{\bar{X}_1(t)-\bar{X}_2(t)}}dt\\
&\leq q\norm{L}_0^{q-1}\pp{L}_1\pr{\int_0^\infty e^{-t}dt\int_0^\infty e^{-(1+2qh)t}\mathbb{E}\pp{\abs{\bar{X}_1(t)-\bar{X}_2(t)}^2}dt}^{\frac{1}{2}}.
\end{align*}
The conclusion follows from the first estimate in Proposition \ref{PropEstimTraj}.\\
The link with the integro-differential equation is standard; see, for instance, \cite[Theorem 1.1]{Soner86_2} (or \cite{goreac_serea_ZubovPDMP}, or \cite{Serrano_2015}).
\item The second assertion follows from the fact that $L$ is bounded away from $0$, while the monotonicity property is standard from Hölder's inequality.
\item Let us now briefly explain the third assertion.
\begin{itemize}
\item The primal formulation is given in the second assertion of Theorem \ref{ThGaclco}.\\
\item If $\psi\in C_b^1\pr{\mathbb{R}^m;\mathbb{R}_+}$, and satisfies the inequality in the right-hand member of \eqref{dualV_q}, then
\begin{align*}
0\leq &-(1+hq)a^q\psi^q(\bar{x})+a^q\scal{f\pr{\bar{x},\bar{u}},\nabla \psi^q(\bar{x})}+a^qL^q\pr{\bar{x},\bar{u}}\\&+\lambda\int_{\mathbb{R}}a^q\pp{\psi^q\pr{\bar{x}+g\pr{\bar{x},\bar{u},y}}-\psi^q\pr{\bar{x}}}\mathbb{P}_{C_1}(dy),
\end{align*}for every $a>0$. By adding $\psi^q(\bar{x}_0)$ and integrating with respect to an arbitrary $\gamma\in \Theta\pr{1,\bar{x}_0}$ it follows, owing to the linear constraint (written for $\phi(a,\bar{x}):=a^q\psi^q(\bar{x})$), that 
\[\psi^q(\bar{x}_0)\leq \int_{\mathbb{R}^{m+1}\times\bar{U}}a^qL^q\pr{\bar{x}}\gamma(da,d\bar{x},d\bar{u}).\]As a consequence, using the first assertion,
$V_q\pr{\bar{x}_0}$ is no lower than the dual ($\sup$) formulation.
\item Let us now show the converse inequality. We define the \emph{shaken} value functions as follows. For $\bar{x}_0\in\mathbb{R}^m$ and $\varepsilon>0$, we let \begin{equation}
\label{v+}
V^{+,\varepsilon}_q(\bar{x}_0):=\inf_{\pr{\bar{u},e}\in\bar{\mathcal{U}}^+}\pr{\mathbb{E}\pp{\int_0^\infty e^{-(1+qh)t}L^q\pr{\bar{X}^{+,\bar{x}_0,\bar{u},\varepsilon e}_1(t)+\varepsilon e(t)}dt}}^{\frac{1}{q}}.
\end{equation}
The admissible controls set $\bar{\mathcal{U}}^+$ stands for predictable couples $(\bar{u},e)$ taking their values in $\bar{U}\times B_1(0)$. To simplify notations, we let \[v_q^{+,\varepsilon}:=\pr{V_q^{+,\varepsilon}}^q,\ v_q:=V_q^q.\]\\
Reasoning as we have already done for the first assertion,  but invoking the second estimate in Proposition \ref{PropEstimTraj}, it follows that 
\begin{equation}\label{dVepsV}
\sup_{\bar{x}_0\in\mathbb{R}}\abs{v^{+,\varepsilon}_q(\bar{x}_0)-v_q(\bar{x}_0\Db{)}}\leq C_q\varepsilon,
\end{equation}where $C_q$ is a constant depending on $q\geq 2$ but not on $\varepsilon>0$. The reader is invited to note that the assumption on $h$ is made exactly to cover the exponential parameter appearing in the second estimate in Proposition \ref{PropEstimTraj}.\\
The Hamilton-Jacobi integro-differential equation satisfied by $v_q^{+,\varepsilon}$ is
\begin{equation}
\label{HJB+}
\begin{split}
0=(1+hq)W(\bar{x})+\sup_{\bar{u}\in\bar{U},\ \abs{e}\leq \varepsilon}\big\{&-\scal{f\pr{\bar{x}+e,\bar{u}},\nabla W(\bar{x})}-L^q\pr{\bar{x}+e}\\&-\lambda\pp{\int_{\mathbb{R}}W\pr{\bar{x}+e+g\pr{\bar{x}+e,\bar{u},y}}\mathbb{P}_{C_1}(dy)-W\pr{\bar{x}}}\big\}
\end{split}
\end{equation}
Following the approach in the proof of \cite[Theorem 7]{goreac_serea_ZubovPDMP} (itself inspired by \cite{krylov_00}, and further spelled out in \cite{barles_jakobsen_02} for Brownian perturbed dynamics), one considers the convoluted function $v_{q,\varepsilon}^{+}:=v_q^{+,\varepsilon}\star\rho_\varepsilon$, where $\rho_\varepsilon(\cdot):=\frac{1}{\varepsilon^{m}}\rho\pr{\frac{y}{\varepsilon}}$ for $\rho$ a $C_\infty$-regular, non-negative function on $\mathbb{R}^{m}$ whose support is contained in the unit ball $B_1(0)$ and $\int_{\mathbb{R}^{m}}\rho(x)dx=1$.  The reader will easily note that \eqref{dVepsV} implies \begin{equation*}\
\abs{v^{+}_{q,\varepsilon}(\bar{x}_0)-v_q(\bar{x}_0\Db{)}}\leq C_q\varepsilon+\sup_{\abs{\bar{x}-\bar{x}_0}\leq \varepsilon}\abs{v_q(\bar{x})-v_q(\bar{x}_0)},
\end{equation*}
such that \begin{equation}\label{ddVepsV}
\lim_{\varepsilon\rightarrow 0+}\abs{v^{+}_{q,\varepsilon}(\bar{x}_0)-v_q(\bar{x}_0\Db{)}}=0.
\end{equation}
These functions provide regular subsolutions of \eqref{HJBq}, i.e. , for all $\bar{u}\in\bar{U}$, and all $\bar{x}\in\mathbb{R}^{m}$,
\begin{equation}
\label{SubsolHJB}
\begin{split}
0\geq &(1+hq)v_{q,\varepsilon}^{+}(\bar{x})-\scal{f\pr{\bar{x},\bar{u}},\nabla v_{q,\varepsilon}^{+}(\bar{x})}-L^q\pr{\bar{x}}\\&-\lambda\pp{\int_{\mathbb{R}}v_{q,\varepsilon}^{+}\pr{\bar{x}+g\pr{\bar{x},\bar{u},y}}\mathbb{P}_{C_1}(dy)-v_{q,\varepsilon}^{+}\pr{\bar{x}}}.
\end{split}
\end{equation}
The regularity of these functions, and the fact that they are bounded away from $0$ allows one to write the equation for \[V_{q,\varepsilon}^{+}:=\pp{v_{q,\varepsilon}^{+}}^{\frac{1}{q}},\]which reads
\begin{equation}
\label{SubsolHJB'}
\begin{split}
0\leq &-\frac{1+hq+\lambda}{q}V_{q,\varepsilon}^{+}(\bar{x})+\scal{f\pr{\bar{x},\bar{u}},\nabla V_{q,\varepsilon}^{+}(\bar{x})}+\frac{1}{q}\pr{\frac{L\pr{\bar{x}}}{V_{q,\varepsilon}^{+}\pr{\bar{x}}}}^qV_{q,\varepsilon}^{+}(\bar{x})\\&+\frac{\lambda}{q}\int_{\mathbb{R}}\pp{\frac{V_{q,\varepsilon}^{+}\pr{\bar{x}+g\pr{\bar{x},\bar{u},y}}}{V_{q,\varepsilon}^{+}\pr{\bar{x}}}}^q\mathbb{P}_{C_1}(dy)V_{q,\varepsilon}^{+}\pr{\bar{x}}.
\end{split}
\end{equation}
It follows that $V_{q,\varepsilon}^{+}$ can be used as a test function in the right-hand member of \eqref{dualV_q} and, thus, owing to \eqref{ddVepsV},
\begin{align*}V_q(\bar{x}_0)=\lim_{\varepsilon\rightarrow 0+}V_{q,\varepsilon}^{+}(\bar{x}_0)\leq \sup\bigg\{&\psi(\bar{x}_0):\ \psi\in C_b^1\pr{\mathbb{R}^{m};\mathbb{R}_+}, \textnormal{ s.t. }\forall (\bar{x},\bar{u})\in \mathbb{R}^m\times\bar{U},\\
&0\leq -\frac{1+hq+\lambda}{q}\psi(\bar{x})+\scal{f\pr{\bar{x},\bar{u}},\nabla \psi(\bar{x})}+\frac{1}{q}\pr{\frac{L\pr{\bar{x}}}{\psi\pr{\bar{x}}}}^q\psi(\bar{x})\\&+\frac{\lambda}{q}\int_{\mathbb{R}}\pp{\frac{\psi\pr{\bar{x}+g\pr{\bar{x},\bar{u},y}}}{\psi\pr{\bar{x}}}}^q\mathbb{P}_{C_1}(dy)\psi\pr{\bar{x}}\bigg\}.
\end{align*}
\end{itemize}
\end{enumerate}
\end{proof}
\begin{remark}\label{RemgPartCase}
We introduce the following.
\begin{ass}\label{Ass_g_1}
The function $\bar{x}\mapsto \bar{x}+g\pr{\bar{x},\bar{u},y}$ is $1$-Lipschitz, for all $y$ in the support of $\mathbb{P}_{C_1}$, and all controls $\bar{u}\in \bar{U}$,  and \[h\geq 2\pp{f}_1+\lambda.\]
\end{ass}
\Db{This condition plays a pivotal role in establishing classical links between the limit value functions and a class of Hamilton-Jacobi integro-differential equations, which align closely with the framework in \cite{barron_ishii_89}. For further details, we refer to the uniformity condition specified in Proposition \ref{PropBI}.}
Under the aforementioned assumption, we have the following sharper results
\begin{enumerate}
\item The family \[\set{V_{q,\varepsilon}^+,V_q^{+,\varepsilon}:\ q\geq 2,\varepsilon>0}\] is equi-continuous (more precisely, $\pp{L}_1$-Lipschitz continuous), and, therefore relatively compact in the set of bounded uniformly continuous functions.\\
Indeed, in this case, having fixed $\bar{x}_k\in\mathbb{R}^m$ and denoting, for arbitrary controls $\pr{\bar{u},e}$, $\bar{X}_k^+$ the associated trajectories ($k\in\set{1,2}$), one has, \Db{owing to the last assertion in Proposition \ref{PropEstimTraj}, i.e., to the inequality \eqref{EstimX1-X2_q}},
\begin{align*}
&\abs{\pr{\mathbb{E}\pp{\int_0^\infty e^{-(1+qh)t}L^q\pr{\bar{X}^{+}_1(t)+e(t)}dt}}^{\frac{1}{q}}-\pr{\mathbb{E}\pp{\int_0^\infty e^{-(1+qh)t}L^q\pr{\bar{X}^{+}_2(t)+e(t)}dt}}^{\frac{1}{q}}}\\
&\leq\int_0^\infty e^{-(1+qh)t} \mathbb{E}\pp{\abs{L\pr{\bar{X}^{+}_1(t)+e(t)}-L\pr{\bar{X}^{+}_2(t)+e(t)}}^q}dt^{\frac{1}{q}}\\
&\leq \pp{L}_1\pr{\int_0^\infty e^{-(1+qh)t}e^{q\pp{f}_1t}dt}^{\frac{1}{q}}\abs{\bar{x}_1-\bar{x}_2}\leq  \pp{L}_1\abs{\bar{x}_1-\bar{x}_2},
\end{align*}as soon as $h\geq \pp{f}_1$. This implies the fact that $V_q^{+,\varepsilon}$ are $\pp{L}_1$-Lipschitz continuous and the same is valid for $V_{q,\varepsilon}^+$.
\item Furthermore,  there exists a constant $C>0$ independent of $q\geq 2$ and of $\varepsilon>0$ such that \begin{equation}\label{EqRem2}
\sup_{q\geq 2}\sup_{\bar{x}_0\in\mathbb{R}^m}\abs{V_q^{+,\varepsilon}(\bar{x}_0)-V_q(\bar{x}_0)}\leq C\varepsilon.
\end{equation}
Indeed,  with the notation $\bar{X}=\bar{X}^{\bar{x}_1,\bar{u}}$,  for $e$ taking its values in $B_\varepsilon(0)$, and owing to the inequality \eqref{EstimX+-xg}, it follows that
\begin{align*}
&\abs{\pr{\mathbb{E}\pp{\int_0^\infty e^{-(1+qh)t}L^q\pr{\bar{X}_1^{+}(t)+e(t)}dt}}^{\frac{1}{q}}-\pr{\mathbb{E}\pp{\int_0^\infty e^{-(1+qh)t}L^q\pr{\bar{X}(t)}dt}}^{\frac{1}{q}}}\\
&\leq\pr{\int_0^\infty e^{-(1+qh)t} \mathbb{E}\pp{\abs{L\pr{\bar{X}^{+}_1(t)+e(t)}-L\pr{\bar{X}(t)}}^q}dt}^{\frac{1}{q}}\\
&\leq \pp{L}_1\pp{\int_0^\infty e^{-(1+qh)t}\pr{q\mathbb{E}\pp{\abs{(\bar{X}_1^+-\bar{X})(t)}^q}+q\varepsilon^q}dt}^{\frac{1}{q}}\\
&\leq \pp{L}_1\varepsilon\pp{q\pr{\pp{f}_1+\lambda q+1}\pr{\int_0^\infty e^{-(1+qh)t}\pp{e^{\pr{(2q-1)\pp{f}_1+\lambda q}t}t+1}dt}}^{\frac{1}{q}}.
\end{align*}The conclusion follows as soon as $h\geq 2\pp{f}_1+\lambda$.
\item By the first assertion,
\begin{align*}
\abs{V_{q,\varepsilon}^+(\bar{x}_0)-V_q^{+,\varepsilon}(\bar{x}_0)}&=\abs{\pr{\int_{\mathbb{R}^m}\pr{V_q^{+,\varepsilon}}^q(\bar{x}_0-\bar{y})\rho_\varepsilon(\bar{y})d\bar{y}}^{\frac{1}{q}}-\pr{\int_{\mathbb{R}^m}\pr{V_q^{+,\varepsilon}}^q(\bar{x}_0)\rho_\varepsilon(\bar{y})d\bar{y}}^{\frac{1}{q}}}\\&\leq \pr{\int_{\mathbb{R}^m}\abs{V_q^{+,\varepsilon}(\bar{x}_0-\bar{y})-V_q^{+,\varepsilon}(\bar{x}_0)}^q\rho_\varepsilon(\bar{y})d\bar{y}}^{\frac{1}{q}}\leq \pp{L}_1\varepsilon.
\end{align*}It follows that  
\begin{equation}\label{eqRem3}
\sup_{q\geq 2}\sup_{\bar{x}_0\in\mathbb{R}^m}\abs{V_{q,\varepsilon}^+(\bar{x}_0)-V_q(\bar{x}_0)}\leq (C+\pp{L}_1)\varepsilon.
\end{equation}.
\end{enumerate}
\end{remark}

\subsection{Step 2. The Limit as $q\rightarrow\infty$ and Link to the Value Function \eqref{VN}}\label{SubsecStep2}
We have the following natural characterization of $V$.
\begin{proposition}\label{PropV=supVq}
 For every $\bar{x}_0\in\mathbb{R}^m$,  the following characterization holds true
 \begin{equation}
 V(\bar{x}_0):=\inf_{\gamma\in\Theta_1(1,\bar{x}_0)}\norm{(a,\bar{x})\mapsto aL\pr{\bar{x}}}_{\mathbb{L}^\infty\pr{d\gamma}}=\sup_{q>1}V_q\pr{\bar{x}_0}.
 \end{equation}
\end{proposition}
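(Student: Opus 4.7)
The plan is to prove the equality by a standard two-sided inequality, using on one side the monotonicity of $\mathbb{L}^q$ norms on probability spaces and on the other side the compactness of $\Theta_1(1,\bar{x}_0)$ in $\mathcal{P}_2$ asserted in Theorem \ref{ThGaclco}. Throughout, I shall use the primal identification $V_q(\bar{x}_0)=\inf_{\gamma\in\Theta_1(1,\bar{x}_0)}\norm{(a,\bar{x})\mapsto aL(\bar{x})}_{\mathbb{L}^q(d\gamma)}$ given by Theorem \ref{PropTool1}, assertion 3.

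First I would establish the easy inequality $\sup_{q>1}V_q(\bar{x}_0)\leq V(\bar{x}_0)$. For every probability measure $\gamma\in\Theta_1(1,\bar{x}_0)$ and every $q>1$, Jensen's inequality (or, equivalently, the monotonicity of $\mathbb{L}^p$-norms in $p$ on a probability space) yields $\norm{aL(\bar{x})}_{\mathbb{L}^q(d\gamma)}\leq \norm{aL(\bar{x})}_{\mathbb{L}^\infty(d\gamma)}$. Taking the infimum over $\gamma$ gives $V_q(\bar{x}_0)\leq V(\bar{x}_0)$, and the first inequality follows.

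Next I would turn to the reverse inequality $V(\bar{x}_0)\leq \sup_{q>1}V_q(\bar{x}_0)$, which is the substantive part. Since the support constraint in the definition of $\Theta(1,\bar{x}_0)$ forces $|a|\leq 1$ and $L$ is bounded, the map $\gamma\mapsto \int a^q L^q(\bar{x})\,d\gamma$ is continuous with respect to the weak (and hence Wasserstein) convergence on $\Theta_1(1,\bar{x}_0)$, which is $\mathcal{P}_2$-compact by Theorem \ref{ThGaclco}. Hence for every fixed $q\geq 2$ there exists an optimizer $\gamma_q\in\Theta_1(1,\bar{x}_0)$ with $\norm{aL(\bar{x})}_{\mathbb{L}^q(d\gamma_q)}=V_q(\bar{x}_0)$. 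Compactness now supplies a subsequence $q_k\to\infty$ such that $\gamma_{q_k}\to\gamma^*$ in $\mathcal{W}_2$ for some $\gamma^*\in\Theta_1(1,\bar{x}_0)$. For any fixed finite $r\geq 2$ and any $q_k\geq r$, monotonicity in the exponent on the probability space $(d\gamma_{q_k})$ gives $\norm{aL(\bar{x})}_{\mathbb{L}^r(d\gamma_{q_k})}\leq \norm{aL(\bar{x})}_{\mathbb{L}^{q_k}(d\gamma_{q_k})}=V_{q_k}(\bar{x}_0)\leq \sup_{q>1}V_q(\bar{x}_0)$. Since $(a,\bar{x})\mapsto |aL(\bar{x})|^r$ is bounded and continuous on the support set, weak convergence yields $\norm{aL(\bar{x})}_{\mathbb{L}^r(d\gamma^*)}\leq \sup_{q>1}V_q(\bar{x}_0)$. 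Finally I would let $r\to\infty$: the standard fact that $\norm{\cdot}_{\mathbb{L}^r(d\gamma^*)}\uparrow \norm{\cdot}_{\mathbb{L}^\infty(d\gamma^*)}$ on the probability space $(d\gamma^*)$ gives $V(\bar{x}_0)\leq \norm{aL(\bar{x})}_{\mathbb{L}^\infty(d\gamma^*)}\leq \sup_{q>1}V_q(\bar{x}_0)$, which completes the proof.

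The only genuine obstacle is making sure that the limit measure $\gamma^*$ lies in $\Theta_1(1,\bar{x}_0)$ and that the passage to the limit in the $\mathbb{L}^r$-norm along $\gamma_{q_k}\to\gamma^*$ is legitimate; both follow from Theorem \ref{ThGaclco} (closedness of $\Theta_1$ under weak/$\mathcal{W}_2$ convergence) and the boundedness/continuity of $|aL|^r$ on the support. If one preferred to avoid invoking existence of optimizers, the same argument goes through with near-optimizers $\gamma_q^\varepsilon$ satisfying $\norm{aL}_{\mathbb{L}^q(d\gamma_q^\varepsilon)}\leq V_q(\bar{x}_0)+\varepsilon$, followed by an $\varepsilon\to 0$ diagonal extraction.
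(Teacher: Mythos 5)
Your proof is correct and takes essentially the same approach as the paper: both use the relaxed/primal identification of $V_q$ over $\Theta_1(1,\bar{x}_0)$, the $\mathcal{P}_2$-compactness of that set to extract a weakly convergent subsequence of optimizers, the monotonicity of $\mathbb{L}^p$-norms on a probability space, a passage to the limit in a fixed $\mathbb{L}^r$-norm via boundedness and continuity of $aL$, and finally $r\to\infty$. The only stylistic difference is that the paper phrases the substantive inequality as a proof by contradiction (assuming a gap $\delta>0$), whereas you run the same chain of inequalities directly — an equivalent and arguably cleaner presentation.
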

\begin{proof}
Since $\pp{0,1}\times \mathbb{R}^m\ni(a,\bar{x})\mapsto a^qL^q\pr{\bar{x}}$ are bounded continuous for every $q\geq 2$,  the linearized formulation of $V_q$ (cf. Theorem \ref{ThGaclco}, assertion 2) yields \[V_q\pr{\bar{x}_0}=\inf_{\gamma\in \Theta_1\pr{1,\bar{x}_0}}\pp{\int_{\mathbb{R}^{m+1}}a^qL^q(\bar{x})\gamma(da,d\bar{x})}^{\frac{1}{q}}\leq V(\bar{x}_0).\]Owing to the compactness of $\Theta_1\pr{1,\bar{x}_0}$,  there exists an optimal $\gamma_q\in \Theta_1\pr{1,\bar{x}_0}$ such that \[V_q\pr{\bar{x}_0}=\int_{\mathbb{R}^{m+1}}a^qL^q(\bar{x})\gamma_q(da,d\bar{x}).\]
Again by the compactness of the family $\Theta_1\pr{1,\bar{x}_0}$, there exists a subsequence still denoted $\gamma_q$ weakly convergent (and also $\mathcal{W}_2$-convergent) to $\gamma_\infty\in \Theta_1\pr{1,\bar{x}_0}$.  We claim that \begin{equation}
\label{gaminf}
\sup_{q>1}V_q\pr{\bar{x}_0}\geq \norm{(a,\bar{x})\mapsto aL\pr{\bar{x}}}_{\mathbb{L}^\infty\pr{d\gamma_\infty}}\geq V(\bar{x}_0),
\end{equation}which will conclude our proof. To this purpose, let us argue by contradiction and assume that there exists $\delta>0$ such that for every $q\geq 2$,
\[V_q\pr{\bar{x}_0}<-\delta+\norm{(a,\bar{x})\mapsto aL\pr{\bar{x}}}_{\mathbb{L}^\infty\pr{d\gamma_\infty}}.\]
For $q_0>1$ fixed,  and every $q\geq q_0$,  the choice of $\gamma_{q}$ yields
\begin{align*}
\pp{\int_{\mathbb{R}^{m+1}}a^{q_0}L^{q_0}(\bar{x})\gamma_q(da,d\bar{x})}^{\frac{1}{q_0}}&\leq \pp{\int_{\mathbb{R}^{m+1}}a^{q}L^{q}(\bar{x})\gamma_q(da,d\bar{x})}^{\frac{1}{q}}\\&=
V_{q}\pr{\bar{x}_0}<-\delta+\norm{(a,\bar{x})\mapsto aL\pr{\bar{x}}}_{\mathbb{L}^\infty\pr{d\gamma}_\infty}.\end{align*}By taking $q\rightarrow\infty$ and due to the continuity and boundedness of the cost function, one gets 
\begin{align*}
\pp{\int_{\mathbb{R}^{m+1}}a^{q_0}L^{q_0}(\bar{x})\gamma_\infty(da,d\bar{x})}^{\frac{1}{q_0}}\leq -\delta+\norm{(a,\bar{x})\mapsto aL\pr{\bar{x}}}_{\mathbb{L}^\infty\pr{d\gamma_\infty}}.\end{align*}
By passing $q_0\rightarrow \infty$, we obtain a contradiction.
\end{proof}

\section{A Classical Formulation}\label{SecClassSol}
In order to simplify the framework, let us enforce Assumption \eqref{Ass_g_1} throughout the section.
The classical formulation requires investigating the (formal) limit in the equation satisfied by the functions $V_q$ i.e. 
\begin{equation}\label{HJBeq'}
\begin{split}
0=\frac{1+hq+\lambda}{q}\psi(\bar{x})+\sup_{\bar{u}\in\bar{U}}\bigg\{&-\scal{f\pr{\bar{x},\bar{u}},\nabla \psi(\bar{x})}-\frac{1}{q}\pr{\frac{L\pr{\bar{x}}}{\psi\pr{\bar{x}}}}^q\psi(\bar{x})\\&-\frac{\lambda}{q}\int_{\mathbb{R}}\pp{\frac{\psi\pr{\bar{x}+g\pr{\bar{x},\bar{u},y}}}{\psi\pr{\bar{x}}}}^q\mathbb{P}_{C_1}(dy)\psi\pr{\bar{x}}\bigg\}
\end{split}
\end{equation}
 and, in particular, of the Hamiltonian part. To this aim, we begin with the following simple result which constitutes a slight generalization to \cite[Proposition 2.1]{barron_ishii_89}.
\begin{proposition}\label{PropBI}
Let $\phi_q:\mathbb{R}\longrightarrow\mathbb{R}_+$ be a non-decreasing sequence of functions converging uniformly on compact sets to $\phi_\infty$ and $\alpha$ be non-negative and continuous on the compact set $\bar{U}$ and $\beta$ be uniformly continuous and bounded on $\bar{U}\times \mathbb{R}_+$.  Furthermore, assume that\[{\inf}_{r\in\beta\pr{\bar{U}\times \mathbb{R}_+}}\phi_1(r)>1.\]Then, for every $\mu, \nu\in\mathbb{R}_+^*$, and every $\bar{x}$ fixed, 
\begin{equation}
\label{LimH}
\lim_{q\rightarrow\infty}\min_{\bar{u}\in\bar{U}}\pp{\alpha(\bar{u})+\frac{1}{q}\pr{\frac{\norm{\phi_q\pr{\beta\pr{\bar{u},\cdot}}}_{\mathbb{L}^q\pr{\mathbb{P}_{C_1}}}}{\nu}}^q\mu}=\inf\set{\alpha(\bar{u}):\ \bar{u}\in \bar{U}(\phi_\infty,\nu)},
\end{equation}
where \begin{equation}
\label{barUnu}
\bar{U}(\phi_\infty,\nu)=\set{\bar{u}\in \bar{U}:\ \phi_\infty\pr{\beta(\bar{u},y)}\leq \nu,  \mathbb{P}_{C_1}(dy)-a.s. }.
\end{equation}
\end{proposition}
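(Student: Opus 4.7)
\textbf{Proof plan for Proposition \ref{PropBI}.} Denote $G_q(\bar{u}):=\alpha(\bar{u})+\frac{\mu}{q}\left(\norm{\phi_q(\beta(\bar{u},\cdot))}_{\mathbb{L}^q(\mathbb{P}_{C_1})}/\nu\right)^q$ and $m_q:=\min_{\bar{u}\in\bar{U}}G_q(\bar{u})$ (the minimum exists by compactness of $\bar{U}$ together with continuity of $G_q$). The plan is to prove the two matching inequalities $\limsup_q m_q\leq \inf_{\bar{U}(\phi_\infty,\nu)}\alpha$ and $\liminf_q m_q\geq \inf_{\bar{U}(\phi_\infty,\nu)}\alpha$. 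For the upper bound I would pick any $\bar{u}^\ast\in\bar{U}(\phi_\infty,\nu)$ (the case of an empty feasible set being trivial via $\inf\emptyset=+\infty$) and use the monotonicity $\phi_q\leq \phi_\infty$ together with the fact that $\mathbb{P}_{C_1}$ is a probability measure to obtain $\norm{\phi_q(\beta(\bar{u}^\ast,\cdot))}_{\mathbb{L}^q(\mathbb{P}_{C_1})}\leq \norm{\phi_\infty(\beta(\bar{u}^\ast,\cdot))}_{\mathbb{L}^\infty(\mathbb{P}_{C_1})}\leq \nu$, hence $G_q(\bar{u}^\ast)\leq \alpha(\bar{u}^\ast)+\mu/q$, which gives the desired inequality by passing to $\limsup_q$ and then to the infimum over $\bar{u}^\ast$.

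For the lower bound I would pick minimisers $\bar{u}_q$ of $G_q$ and extract, by compactness of $\bar{U}$, a convergent subsequence $\bar{u}_q\to\bar{u}_\infty$. The upper bound just proved ensures $m_q$ is uniformly bounded, so $\frac{1}{q}(\norm{\phi_q(\beta(\bar{u}_q,\cdot))}_{\mathbb{L}^q(\mathbb{P}_{C_1})}/\nu)^q$ is bounded as well, which forces $\limsup_q \norm{\phi_q(\beta(\bar{u}_q,\cdot))}_{\mathbb{L}^q(\mathbb{P}_{C_1})}\leq \nu$. The crux is then to show $\bar{u}_\infty\in\bar{U}(\phi_\infty,\nu)$; once this is established, continuity of $\alpha$ gives $\liminf_q m_q\geq \liminf_q \alpha(\bar{u}_q)=\alpha(\bar{u}_\infty)\geq \inf_{\bar{U}(\phi_\infty,\nu)}\alpha$, closing the argument.

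To obtain $\bar{u}_\infty\in\bar{U}(\phi_\infty,\nu)$ I would reason by contradiction: if not, there exist $\varepsilon>0$ and a measurable set $A$ with $\mathbb{P}_{C_1}(A)>0$ on which $\phi_\infty(\beta(\bar{u}_\infty,y))\geq \nu+\varepsilon$. Combining (i) the uniform convergence of $\phi_q$ to $\phi_\infty$ on the compact range of $\beta$, (ii) the uniform continuity of $\beta$ in its first argument, which yields $\beta(\bar{u}_q,\cdot)\to\beta(\bar{u}_\infty,\cdot)$ uniformly in $y$, and (iii) the uniform continuity of $\phi_\infty$ on that compact range (inherited, in the intended application, from the continuity of each $\phi_q$ via the uniform convergence on compacts), one can transport the lower bound to $\phi_q(\beta(\bar{u}_q,y))\geq \nu+\varepsilon/2$ on $A$ for all sufficiently large $q$. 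This yields $\norm{\phi_q(\beta(\bar{u}_q,\cdot))}_{\mathbb{L}^q(\mathbb{P}_{C_1})}\geq (\nu+\varepsilon/2)\,\mathbb{P}_{C_1}(A)^{1/q}\to \nu+\varepsilon/2>\nu$, contradicting the earlier $\limsup$ bound.

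The main technical obstacle is precisely this three-step transport of the strict inequality from $(\phi_\infty,\bar{u}_\infty)$ to $(\phi_q,\bar{u}_q)$, which relies on enough regularity of the $\phi_q$'s for their uniform limit to be continuous on the compact range of $\beta$. The side assumption $\inf_{r\in\beta(\bar{U}\times\mathbb{R}_+)}\phi_1(r)>1$ plays an auxiliary role as a non-degeneracy condition ensuring that the constraint $\phi_\infty(\beta(\bar{u},\cdot))\leq\nu$ and the penalty term interact meaningfully in the relevant parameter regime, but it does not enter the core scheme sketched above.
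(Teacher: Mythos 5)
Your proof is correct and follows essentially the same scheme as the paper's: upper bound via the monotonicity $\phi_q\leq\phi_\infty$ evaluated at a feasible control, and lower bound by extracting minimizers $\bar{u}_q$, a convergent subsequence $\bar{u}_q\to\bar{u}_\infty$, and using the equi-continuity of $(\phi_q)_q$ together with the uniform continuity of $\beta$ to conclude $\bar{u}_\infty\in\bar{U}(\phi_\infty,\nu)$. The only cosmetic difference is in the last step (the paper bounds $\norm{\phi_q(\beta(\bar{u}_\infty,\cdot))}_{\mathbb{L}^q}$ directly and lets $q\to\infty$, while you argue via a positive-measure set $A$ on which $\phi_\infty(\beta(\bar{u}_\infty,\cdot))>\nu+\varepsilon$), and both authors leave the lower bound $\phi_1>1$ unused in this proof; note also that when $\bar{U}(\phi_\infty,\nu)=\emptyset$ the claim is not entirely trivial and the paper runs the same compactness argument to rule out a finite left-hand limit, a detail you gloss over.
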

The proof is quite standard, one inequality being obvious and the remaining one being obtained through contradiction reasoning. For our readers' sake, its proof is given in the Appendix. As usual, infimum over empty sets is set to be $+\infty$.
\begin{remark}
If the support of $\mathbb{P}_{C_1}$ is compact (or, more generally, by assuming \[\bar{u}\mapsto esssup\ \phi_\infty(\beta\pr{\bar{u},\cdot})\] to be continuous), non-emptiness of $\bar{U}(\phi_\infty,\nu)$ is guaranteed for $\nu\geq \underset{\bar{u}\in\bar{U}}{\inf}esssup\ \phi_\infty(\beta\pr{\bar{u},\cdot})$. The $ess sup$ is intended with respect to $\mathbb{P}_{C_1}$.
\end{remark}
Throughout the remaining of the section, we further assume that the domain of interest is some compact set $K$\footnote{for epidemics, this is reduced to the tetrahedron $\set{(s,i,r)\in\mathbb{R}^3_+\ :\ s+i+r\leq 1}$ and a normalization can be imposed on the capital by changes of numeraire}. We assume that the compact set $K$ is invariant with respect to the control system \eqref{SDEgen}. In particular, one can modify the dynamics on a compact set $K\subset K^0$ such that $f(\bar{x},\bar{u})=g(\bar{x},\bar{u},\cdot)=0$ for every $\bar{x}\in \mathbb{R}^m\setminus K^0$ and every $\bar{u}\in  \bar{U}$. This guarantees that, under the Assumption \eqref{Ass_g_1}, the convergence of $V_q$ to $V$ is uniform on $K^0$, hence on  $\mathbb{R}^m$.
\subsection{The HJB Equation}
We consider the following non-local Hamiltonian
\begin{equation}\begin{split}
H(\psi,x,r,p)&=\inf_{\bar{u}\in \bar{U}(\psi,x,r)}\scal{p,f(x,\bar{u})},\\&\textnormal{ where }\bar{U}(\psi,x,r)=\set{\bar{u}\in \bar{U}:\ \psi(x+g(x,\bar{u},y))\leq r,\ \mathbb{P}_{C_1}-a.s.},\end{split}
\end{equation}
for $x\in\mathbb{R}^m,\ r\in\mathbb{R},\ p\in\mathbb{R}^m$ and $\psi$ a bounded uniformly continuous function (shortly expressed as $\psi  \in BUC(\mathbb{R}^m;\mathbb{R}$)
and the equation
\begin{equation}\label{HJB}
-h\psi(x)+H(\psi,x,\psi(x),\nabla\psi(x))=0.
\end{equation}
Before providing the rigorous definition of solutions, let us give the following result on the relaxed Hamiltonians and the control sets. This is a straightforward generalization to the integro-differential case of \cite[Lemma 15]{GHS_2021}, compiling the results in \cite[Propositions 2.1 and 2.5, and Lemma 2.4]{barron_ishii_89}.
\begin{proposition}\label{PropU}
Let $\bar{U}(\psi,x,r)=\set{\bar{u}\in \bar{U}:\ \psi(x+g(x,\bar{u},y))\leq r,\ \mathbb{P}_{C_1}-a.s.}$. Then
\begin{enumerate}
\item If $r\leq r'$, then $\bar{U}(\psi,x,r)\subseteq \bar{U}(\psi,x,r')$.
\item If $r<r'$ and $(\psi_q)_q$ are bounded and equi-continuous converging uniformly on compact sets to $\psi=\psi_\infty$ as $q\rightarrow\infty$, then, for $q_0$ large enough, there exists $\delta(r,r',q_0)>0$ such that
\[\bar{U}(\psi_q,x',r)\subseteq \bar{U}(\psi_{q'},x,r'),\ \forall x'\in B_\delta(x), \ \forall q,q'\in\pp{q_0,\infty}.\]
\item Recall that $H(\psi, x,r,p)=\min_{\bar{u}\in \bar{U}(\psi,x,r)}\scal{p,f(x,\bar{u})}$. Then 
\begin{enumerate}
\item If $r\leq r'$, then $H(\psi, x,r,p)\geq H(\psi, x,r',p)$;
\item If $r<r'$, then, under the assumption of 2. above, for every $q_0>0$ large enough, there exists some $\delta(r,r',q_0)>0$ such that 
\[H(\psi_q,x',r,p)\geq H(\psi_{q'},x,r',p),\ \forall x'\in B_\delta(x),\textnormal{ and }\forall q,q'\in\pp{q_0,\infty}.\]
\item The relaxed Hamiltonians satisfy
\begin{equation}\begin{split}
\begin{cases}
H^*(\psi,x,r,p)&=\underset{\varepsilon\rightarrow 0}{\lim\sup}\set{H(\psi,x',r',p')\ : \abs{x'-x}+\abs{r'-r}+\abs{p'-p}\leq \varepsilon}\\&=H(\psi,x,r-,p)\\
H_*(\psi,x,r,p)&=\underset{\varepsilon\rightarrow 0}{\lim\inf}\set{H(\psi,x',r',p')\ : \abs{x'-x}+\abs{r'-r}+\abs{p'-p}\leq \varepsilon}\\&=H(\psi,x,r+,p).
\end{cases}
\end{split}
\end{equation}.
\end{enumerate}
\end{enumerate}
\end{proposition}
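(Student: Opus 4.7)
My plan is to establish the three assertions in order, with Assertion 2 carrying the technical bulk and Assertions 3(a)--(c) following by combining its set-theoretic output with the monotonicity of $H$ in its scalar argument. For Assertion 1, I would merely remark that if $\bar u\in\bar U(\psi,x,r)$, then $\psi(x+g(x,\bar u,y))\le r\le r'$ for $\mathbb{P}_{C_1}$-almost every $y$, so $\bar u\in\bar U(\psi,x,r')$.

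For Assertion 2, the strategy is to split $\psi_{q'}(x+g(x,\bar u,y))$ into three pieces, each bounded by $\eta:=(r'-r)/3$. Equi-continuity of $(\psi_q)_q$ produces a uniform-in-$q$ modulus, hence $\delta_0>0$ with $|\psi_q(z_1)-\psi_q(z_2)|\le\eta$ whenever $|z_1-z_2|\le\delta_0$; uniform convergence on the invariant compact $K^0$ introduced immediately before the section furnishes $q_0$ such that $|\psi_q-\psi_{q'}|\le\eta$ on $K^0$ for all $q,q'\ge q_0$. Set $\delta:=\delta_0$ and $x'\in B_\delta(x)$. Assumption \ref{Ass_g_1} delivers the crucial pointwise bound $|x+g(x,\bar u,y)-x'-g(x',\bar u,y)|\le|x-x'|\le\delta_0$ uniformly in $\bar u$ and $y\in\mathrm{Supp}\,\mathbb{P}_{C_1}$. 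Chaining the three controls, for $\bar u\in\bar U(\psi_q,x',r)$,
\[
\psi_{q'}(x+g(x,\bar u,y))\le\psi_q(x+g(x,\bar u,y))+\eta\le\psi_q(x'+g(x',\bar u,y))+2\eta\le r+2\eta<r',
\]
$\mathbb{P}_{C_1}$-a.s., which is exactly the required inclusion.

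Assertions 3(a) and 3(b) then flow directly. For 3(a), the inclusion from Assertion 1 immediately forces $\inf_{\bar U(\psi,x,r)}\ge\inf_{\bar U(\psi,x,r')}$. For 3(b), the inclusion from Assertion 2 yields $H(\psi_q,x',r,p)\ge\inf_{\bar u\in\bar U(\psi_{q'},x,r')}\langle p,f(x',\bar u)\rangle$; since $f$ is Lipschitz in $x$ uniformly in $\bar u$, the residual discrepancy $|\langle p,f(x',\bar u)-f(x,\bar u)\rangle|\le|p|\,[f]_1\,\delta$ is absorbed by shrinking $\delta$ further, reading the statement with $p$ in a bounded range as is standard in viscosity arguments.

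The most delicate step is Assertion 3(c). Monotonicity from 3(a) makes $H(\psi,x,\cdot,p)$ non-increasing, so $H(\psi,x,r\pm,p)$ exist as monotone limits. For the identification $H^*(\psi,x,r,p)=H(\psi,x,r-,p)$, the lower bound follows by plugging $(x,s_n,p)$ with $s_n\uparrow r$ into the definition of $H^*$. For the upper bound, given $s<r$ and $\varepsilon>0$, I would pick $\bar u^\star\in\bar U(\psi,x,s)$ with $\langle p,f(x,\bar u^\star)\rangle\le H(\psi,x,s,p)+\varepsilon$; apply Assertion 2 in the degenerate single-$\psi$ form to inject $\bar u^\star$ into $\bar U(\psi,x',s')$ for any $s<s'<r$ and $x'$ close enough to $x$; this yields $H(\psi,x',r',p')\le\langle p',f(x',\bar u^\star)\rangle$ as soon as $r'\ge s'$. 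Passing to $\limsup$ and then letting $s\uparrow r$, $\varepsilon\downarrow 0$ delivers $H^*\le H(\psi,x,r-,p)$; the argument for $H_*$ is the mirror image with $s_n\downarrow r$. The main obstacle I anticipate is the $f(x,\cdot)$ vs.~$f(x',\cdot)$ mismatch hidden throughout 3(b)--(c), absorbed using the strict slack in $r$ via the Lipschitz continuity of $f$; once this bookkeeping is in place, the remainder is driven entirely by the 1-Lipschitz property of Assumption \ref{Ass_g_1} and the $r$-monotonicity.
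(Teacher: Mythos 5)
Your proof tracks the paper's approach closely: Assertion 2 is established by a chain estimate controlled by the common continuity modulus of $(\psi_q)$ and the uniform convergence to $\psi$ (the paper bounds the spatial increment by $\omega\pr{(1+\pp{g}_1)\abs{x-x'}}$ whereas you invoke the sharper $1$-Lipschitz property of $\bar x\mapsto\bar x+g(\bar x,\bar u,y)$ from Assumption \ref{Ass_g_1}; both are valid under that assumption), and Assertion 3(c) is obtained by propagating a near-optimal control through the set inclusion, which is the argument the paper compresses into ``one easily shows\dots due to the previous assertions.''

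The one place where your wording overstates what has been achieved is Assertion 3(b). You correctly observe that the control-set inclusion alone only gives
\[
H(\psi_q,x',r,p)\ \geq\ \min_{\bar u\in\bar U(\psi_{q'},x,r')}\scal{p,f(x',\bar u)},
\]
leaving a residual of order $\abs{p}\pp{f}_1\abs{x-x'}$ from evaluating $f$ at $x'$ rather than at $x$. Claiming that this residual ``is absorbed by shrinking $\delta$ further'' does not restore the exact inequality in 3(b): for any fixed $\delta>0$ the error term remains strictly positive, so 3(b) as literally stated is only recovered up to an $O(\delta)$ correction. (The paper's own proof sketch, ``the assertions 3.(a) and (b) immediately follow from the control sets inclusion,'' glosses over precisely the same point, so you are not introducing a new flaw.) Fortunately this imprecision does not propagate to Assertion 3(c), which is what matters for the viscosity theory: there your argument sidesteps 3(b) by fixing a minimizer $\bar u^\star\in\bar U(\psi,x,s)$ for $s<r$, inserting it into $\bar U(\psi,x',s')$ via Assertion 2, bounding $H(\psi,x',r',p')\leq\scal{p',f(x',\bar u^\star)}$, and letting both the $f$- and $p$-mismatches vanish in the limit before sending $s\uparrow r$. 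That route is correct; the mirror version for $H_*$ works the same way. In short, the substance is right; the only thing to correct is the suggestion that the $f$-mismatch disappears for fixed $\delta$, when in fact 3(b) should be either stated modulo $O(\delta)$ or treated, as you do in 3(c), as a step to be sidestepped in the limit.
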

The proof is a straight-forward adaptation of the original ones in \cite{barron_ishii_89} and it will be sketched in the Appendix.
\subsection{Viscosity Solution}
We begin with providing the following definition.
\begin{definition}\label{DefSol}
\begin{enumerate}
\item An upper semi-continuous function $\psi$ is called a \emph{viscosity subsolution} of \eqref{HJB} if for every $\psi\in C^1$ and $x\in\arg\max\ (\psi-\phi)$,
\[-h\phi(x)+H^*(\phi,x,\psi(x),\nabla\phi(x))\geq 0.\]
\item A lower semi-continuous function $\psi$ is called a \emph{viscosity supersolution} of \eqref{HJB} if for every $\psi\in C^1$ and $x\in\arg\min\ (\psi-\phi)$,
\[-h\phi(x)+H_*(\phi,x,\psi(x),\nabla\phi(x))\leq 0.\]
\end{enumerate}
\end{definition}By taking a look at the proofs, the reader will easily note that the test functions $\psi$ in the previous definition can actually be taken bounded from bellow away from $0$. 
\begin{theorem}\label{ThClassSol}
Let us assume that there exists a compact set $K$ invariant with respect to the dynamics \eqref{SDEgen} and that $K\subset K^0$ is another compact set outside of which $f$ and $g$ are null. Furthermore, we assume that \ref{Ass_g_1} holds true.
Then, the function $V$ is the unique bounded $\pp{L}_1$-Lipschitz continuous viscosity solution in the sense of Definition \ref{DefSol}. It further satisfies the lower barrier $V\geq L$.
\end{theorem}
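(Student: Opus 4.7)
The plan is to transfer the viscosity properties of the approximating functions $V_q$ to the limit $V$ using the uniform estimates available under Assumption \ref{Ass_g_1}. The cornerstone is Remark \ref{RemgPartCase}, which yields a uniform $[L]_1$-Lipschitz bound for the whole family $\set{V_q,V_q^{+,\varepsilon},V_{q,\varepsilon}^+}_{q\geq 2,\,\varepsilon>0}$ together with the rates \eqref{EqRem2}-\eqref{eqRem3}. Since $V=\sup_q V_q$ by Proposition \ref{PropV=supVq}, $V$ inherits $[L]_1$-Lipschitz continuity as a pointwise supremum of uniformly Lipschitz functions. Because the sequence is monotone with continuous limit, Dini's theorem yields locally uniform convergence $V_q\nearrow V$ on the invariant compact $K^0$; this extends to $\mathbb{R}^m$ by the common Lipschitz bound.

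\textbf{The lower barrier $V\geq L$.} We give a direct probabilistic argument. Fix $\bar{x}_0$ and $\delta>0$. Before the first jump time $\tau_1\sim\mathrm{Exp}(\lambda)$ the trajectory $\bar{X}^{\bar{x}_0,\bar{u}}$ is deterministic with velocity bounded by $\norm{f}_0$, so $\abs{\bar{X}(t)-\bar{x}_0}\leq\norm{f}_0 t\leq\delta$ for $t\leq t_\delta:=\delta/\norm{f}_0$, uniformly in $\bar{u}$. Using $L(\bar{X}(t))\geq L(\bar{x}_0)-[L]_1\delta$ on $\set{t\leq t_\delta\wedge\tau_1}$ and Fubini,
\[
V_q^q(\bar{x}_0)\geq\pr{L(\bar{x}_0)-[L]_1\delta}^q\int_0^{t_\delta}e^{-(1+qh+\lambda)t}dt=\pr{L(\bar{x}_0)-[L]_1\delta}^q\cdot\tfrac{1-e^{-(1+qh+\lambda)t_\delta}}{1+qh+\lambda}.
\]
Taking $q$-th roots, using $(1+qh+\lambda)^{1/q}\to 1$, and letting $\delta\to 0$ yields $V(\bar{x}_0)=\sup_q V_q(\bar{x}_0)\geq L(\bar{x}_0)$.

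\textbf{Viscosity solution.} By Theorem \ref{PropTool1}, $V_q^q$ is a viscosity solution of \eqref{HJBq}; the substitution $V_q=(V_q^q)^{1/q}$ (legitimate since $V_q\geq\inf L>0$) yields that $V_q$ is a viscosity solution of
\[
\tfrac{1+qh+\lambda}{q}V_q(x)-\tfrac{L^q(x)}{qV_q^{q-1}(x)}-\inf_{\bar{u}\in\bar{U}}\set{\scal{f(x,\bar{u}),\nabla V_q(x)}+\tfrac{\lambda V_q(x)}{q}\int\pr{\tfrac{V_q(x+g(x,\bar{u},y))}{V_q(x)}}^q\mathbb{P}_{C_1}(dy)}=0.
\]
For the subsolution property, fix $\phi\in C^1$ and a strict local maximum $x_0$ of $V-\phi$. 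Standard viscosity stability together with the uniform convergence $V_q\to V$ supplies maximizers $x_q\to x_0$ of $V_q-\phi$. At $x_q$ the first term tends to $hV(x_0)$, the second vanishes because $V(x_0)\geq L(x_0)$ implies $(L/V_q)^qV_q/q\to 0$, and the nonlocal $\inf$-term is handled by Proposition \ref{PropBI} applied with $\phi_q=\mathrm{id}$, $\alpha(\bar{u})=\scal{f(x_0,\bar{u}),\nabla\phi(x_0)}$, $\beta(\bar{u},y)=V(x_0+g(x_0,\bar{u},y))$, $\mu=\lambda V(x_0)$, $\nu=V(x_0)$, converging to $\inf_{\bar{u}\in\bar{U}(V,x_0,V(x_0))}\scal{f(x_0,\bar{u}),\nabla\phi(x_0)}$. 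Rearranging one gets $-h\phi(x_0)+H^*(\phi,x_0,V(x_0),\nabla\phi(x_0))\geq 0$, using the one-sided-in-$r$ identity $H^*=H(\cdot,r-,\cdot)$ of Proposition \ref{PropU}(3c) to identify the upper envelope produced by the limiting procedure. The supersolution property is derived symmetrically with $H_*$.

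\textbf{Uniqueness and main obstacle.} Uniqueness within bounded $[L]_1$-Lipschitz solutions follows from a doubling-of-variables comparison principle on $K^0$: given a subsolution $\underline{\psi}$ and a supersolution $\bar{\psi}$, penalizing $\underline{\psi}(x)-\bar{\psi}(y)-\abs{x-y}^2/(2\eta)$, invoking the monotonicity $H(\cdot,r,\cdot)\geq H(\cdot,r',\cdot)$ for $r\leq r'$ from Proposition \ref{PropU}(3a), the $1$-Lipschitz nature of $x\mapsto x+g(x,\bar{u},y)$ (Assumption \ref{Ass_g_1}) and the integrability of $\norm{g}_0$ against $\mathbb{P}_{C_1}$, one controls the nonlocal contributions uniformly and concludes $\underline{\psi}\leq\bar{\psi}$. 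The main obstacle is the combined non-locality and discontinuity of $H$: the control set $\bar{U}(\psi,x,r)$ depends on $\psi$ through an essential-supremum constraint and $H$ is only one-sided continuous in $r$, so one must carefully track both envelopes $H^*=H(\psi,x,r-,p)$ and $H_*=H(\psi,x,r+,p)$ from Proposition \ref{PropU}(3c), both when passing to the $q\to\infty$ limit and when closing the comparison argument at the doubled-variable maximum.
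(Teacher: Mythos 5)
Your overall framework — uniform Lipschitz bound from Remark \ref{RemgPartCase}, $V=\sup_q V_q$ via Proposition \ref{PropV=supVq}, Dini for locally uniform convergence, viscosity stability on the extracted maximizers/minimizers, Proposition \ref{PropU} to identify $H^*$ and $H_*$, and a doubling-of-variables comparison à la \cite[Proposition 19]{GHS_2021} — is the same as the paper's. The one place where you take a genuinely different path is the lower barrier $V\geq L$. The paper obtains it as a \emph{byproduct} of the viscosity limit: at the approximating minimizers $x_q$ the inequality forces $\frac{1}{q}\bigl(L(x_q)/\phi(x_q)\bigr)^q\phi(x_q)$ to stay bounded, hence $L(x)\leq\phi(x)=V(x)$, otherwise the $q$-th power diverges. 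You instead give a \emph{direct probabilistic} argument: bound the trajectory before the first jump by $\|f\|_0 t$, restrict the time-integral to $[0,t_\delta\wedge\tau_1]$, use $\mathbb{P}(\tau_1>t)=e^{-\lambda t}$, take $q$-th roots, send $q\to\infty$ then $\delta\to 0$. This is correct and arguably more self-contained, since it does not assume the viscosity stability machinery; on the other hand, the paper's version gets the barrier essentially for free within the limit argument it needs anyway.

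One technical caveat. Your application of Proposition \ref{PropBI} with $\phi_q=\mathrm{id}$, $\beta(\bar u,y)=V(x_0+g(x_0,\bar u,y))$, $\nu=V(x_0)$ does not, as written, satisfy the hypothesis $\inf_{r\in\beta(\bar U\times\mathbb{R}_+)}\phi_1(r)>1$ of that proposition, since $V$ is only bounded below by $\inf L>0$, not by $1$, and $\alpha(\bar u)=\langle f(x_0,\bar u),\nabla\phi(x_0)\rangle$ need not be non-negative. The paper avoids invoking Proposition \ref{PropBI} with these exact parameters inside the viscosity argument: it instead performs the limit on the explicit inequality at $x_q$ by shifting the test function by $\pm\varepsilon$,
\[
0\geq -\varepsilon-\tfrac{1+hq+\lambda}{q}\phi(x)+\inf_{\bar u}\Bigl\{\langle f(x,\bar u),\nabla\phi(x)\rangle+\tfrac{\lambda}{q}\int\Bigl[\tfrac{\phi(x+g(x,\bar u,y))-\varepsilon}{\phi(x)+\varepsilon}\Bigr]^q\mathbb{P}_{C_1}(dy)(\phi(x)-\varepsilon)\Bigr\},
\]
and then sends $q\to\infty$, $\varepsilon\to 0+$. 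You should either adopt this $\varepsilon$-shift, or normalize the test function (the remark after Definition \ref{DefSol} allows test functions bounded away from $0$, and one may further scale so that the relevant ranges exceed $1$, and split off the sign of $\alpha$ before applying Proposition \ref{PropBI}). With that adjustment your argument goes through.
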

\Db{Concerning the argument in the proof,} neither the viscosity property, nor the uniqueness in the class of BUC functions are \Db{fundamentally} different from \cite[Theorem 16, Proposition 19]{GHS_2021} (themselves quasi-identical to the original results in \cite{barron_ishii_89}). All these results are based on the properties of the constrained controls and the relaxed Hamiltonians in Proposition \ref{PropU} combined with Proposition \ref{PropBI}. \Db{The main elements of proof are presented in the Appendix.}
\section{Going back to the reference model in Section 2}\label{SecNew}
{\subsection*{Positiveness of the Components of \eqref{SIR_dyn}}
For simplicity, we deal with the policyholder $j=1$ and will drop the dependency on $\bar{u}=(u,p)$. Note that $d(S+I+R)_1(t)=0$, which means that one preserves the total population to $(S+I+R)_1(0)=1$. Between the jumps, we deal with a classical SIR model and the components are known to be non-negative.\\
At jumping times, $S_1$ jumps to $\min\set{S_1, 1-\frac{I^0}{p_1}}$ and $I_1$ to $\max\set{I_1,\frac{I^0}{p_1}}$. Since $p_1\geq 1$, both components remain non-negative.
If, prior to the jump, $I_1<\frac{I^0}{p_1}$, it does not get updated, and it follows that $I_1+\min\set{S_1, 1-\frac{I^0}{p_1}}\leq I_1+S_1\leq1.$
Otherwise, $I_1$ updates to $\frac{I^0}{p_1}$, but 
$\frac{I^0}{p_1}+\min\set{S_1, 1-\frac{I^0}{p_1}}\leq 1.$
By construction, $R_1$ jumps to satisfy $I_1+S_1+R_1=1$, thus remaining non-negative. An alternative argument based on viability can be presented as in \cite[Proof of Theorem 1]{GLX_2022}.}
\subsection*{Induced Assumptions}\label{SubsfgIns}
For simplicity reasons, let us denote by $\bar{x}=(s,i,x)\in \mathbb{R}^{2n+1}$ (actually one only needs to consider $(s_j,i_j)\in \mathbb{T}$, for every $1\leq j\leq n$) and $\bar{u}=(u,p)$. The set of admissible control $\bar{\mathcal{U}}$ is the family of predictable $\bar{U}$-valued processes. The drift and jump coefficients $f$ and $g$ are explicitly written as vector columns with dimension $2n+1$ by setting
\begin{align*}g\pr{\bar{x},\bar{u},y}=g(s,i,x,u,p,y)&:=
\begin{pmatrix}
\pp{-\pr{s_j-1+\frac{\sum_{1\leq \Db{k}\leq n}i_{\Db{k}}}{np_j}}^+}_{ 1\leq j\leq n}\\
\pp{\pr{\frac{\sum_{1\leq \Db{k}\leq n}i_{\Db{k}}}{np_{j}}-i_j}^+}_{1\leq j\leq n}\\
-\sum_{1\leq k'\leq n}\pr{\frac{\sum_{1\leq k\leq n}i_k}{np_{k'}}-i_{k'}}^+uy
\end{pmatrix},\\ f\pr{\bar{x},\bar{u}}=f(s,i,x,u,p)&:=\begin{pmatrix}\pp{-\beta_jus_ji_j}_{1\leq j\leq n}\\ \pp{\pr{\beta_jus_j-\gamma_j}i_j}_{1\leq j\leq n}\\c(s,i,u,p)\end{pmatrix}.\end{align*}

\begin{ass}\label{Ass1}
\begin{enumerate}
\item[(i)] The set of protection levels $\mathcal{P}rev\subset [1,\infty)^n$ is compact.
\item[(ii)] The law $\mathbb{P}_{C_1}$ admits a finite fourth-order moment.
\item[(iii)] The premium coefficient $c:\mathbb{R}^{2n}\times U\times \mathcal{P}\rightarrow \mathbb{R}$ is Lipschitz continuous in the state variable $(s,i)$ uniformly with respect to the control parameters $(u,p)\in U\times \mathcal{P}rev$.
\end{enumerate}
\end{ass}
For a generic function $\phi:\mathbb{R}^{2n+1}\times U\times \mathcal{P}rev\rightarrow \mathbb{R}^{n'}$ (taking its values in a Euclidean space $\mathbb{R}^{n'}$,  we set
\begin{align}
\label{f0f1}
\begin{cases}
\norm{\phi}_0:=\sup\bigg\{&\abs{\phi(s,i,x,u,p)}:\ s=(s_1,\ldots, s_n),i=\pr{i_1,\ldots,i_n},\ \pr{s_j,i_j}\in \mathbb{T},\  1\leq j\leq n, \\& x\in\mathbb{R},\ u\in U, p\in\mathcal{P}rev \bigg\}\\
\pp{\phi}_1:=\sup\bigg\{&\frac{\abs{\phi(s,i,x,u,p)-\phi(s',i',x',u,p}}{\abs{\pr{s-s',i-i',x-x'}}}:\\ &s=(s_1,\ldots, s_n),i=\pr{i_1,\ldots,i_n},\ \pr{s_j,i_j}\in \mathbb{T},\  1\leq j\leq n, \\&s'=(s_1',\ldots, s_n'),i'=\pr{i_1',\ldots,i_n'},\ \pr{s_j',i_j'}\in \mathbb{T},\  1\leq j\leq n, \ x,x'\in\mathbb{R},\\ &(s,i,x)\neq (s',i',x'),\ u\in U, p\in\mathcal{P}rev \bigg\}.
\end{cases}
\end{align}
\begin{remark}\label{RemCoeff}With these notation and under the Assumption \ref{Ass1},  we have the following 
\begin{enumerate}
\item[(i)] The premium coefficient $c$ can be taken to be bounded since the dynamics only concern $\pr{s,i}$ such that $\pr{s_j,i_j}\in\mathbb{T}$ for all $1\leq j\leq n$.
\item[(ii)] The coefficient $f$ is bounded and Lipschitz continuous with $\norm{f}_0+\pp{f}_1<\infty$.
\item[(iii)] Similarly, $g\pr{\cdot,y}$ is bounded and Lipschitz continuous for every $y\in \mathbb{R}$ and the quantities $\norm{g\pr{\cdot,y}}_0$ and $\pp{g\pr{\cdot,y}}_1$ as before satisfy $\norm{g\pr{\cdot,y}}_0\leq \norm{g}_0\abs{y}$, $\pp{g\pr{\cdot,y}}_1\leq \pp{g}_1\abs{y}$, for constants $\norm{g}_0,\pp{g}_1\in\mathbb{R}_+$.
\item[(iv)] These assumptions can made for $f$ and $g$ for $(s,i)\in \mathbb{R}^{2n}$ instead of just restricted to $\mathbb{T}_n$ by simply extending the functions via projection of $(s,i)$ onto $\mathbb{T}_n$, or simply by setting $f$ and $g$ to $0$ outside a compact set containing $\mathbb{T}_n$.
\end{enumerate}
\end{remark}
\subsection*{Example of Premium}
Let us conclude the section with the analogous of a \emph{net premium}, i.e. an explicit form for $c$
\begin{proposition}\label{PropPremium}
Under the Assumption \ref{Ass1}, if $c=c_0$ is such that the velocity \[\lim_{t\rightarrow 0+}\frac{1}{t}\mathbb{E}\pp{\abs{X^{S_0,I_0,x,u,p}(t)-x}}=0, \]\Db{for a piecewise constant policy $(u,p)$}, then, \begin{equation}\label{Expc0}c_0(s,i,u,p)=\sum_{1\leq j\leq n}\lambda\mathbb{E}\pp{C_1}\Db{u}\pr{\frac{\sum_{1\leq \Db{k} \leq n}i_{\Db{k}}}{np_j}-i_j}^+.\end{equation}
\end{proposition}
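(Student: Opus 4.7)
The plan is to take expectations in the integrated form of \eqref{reserve}, replace the compound Poisson integral by its compensator, and then compute the infinitesimal rate of $\mathbb{E}[X(t)] - x$ at $t = 0+$ and match it with the hypothesis. Under Assumption \ref{Ass1}, the finiteness of $\mathbb{E}[C_1]$ together with the uniform bounds on $c$ and on the post-jump increment on $\mathbb{T}_n \times U \times \mathcal{P}rev$ supplied by Remark \ref{RemCoeff} justify Fubini and the replacement of $N$ by its compensator $\hat{N}(dl, dy) = \lambda\, \mathbb{P}_{C_1}(dy)\, dl$, leading to
\begin{align*}
\mathbb{E}[X^{S_0,I_0,x,u,p}(t)] - x &= \mathbb{E}\int_0^t c_0(S(l), I(l), u(l), p(l))\, dl \\
&\quad - \lambda\, \mathbb{E}[C_1]\, \mathbb{E}\int_0^t \sum_{j=1}^n \left(\frac{I^0(l)}{p_j(l)} - I_j(l)\right)^{\!+} u(l)\, dl.
\end{align*}

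Next, since $(u, p)$ is piecewise constant, I would restrict to a small initial interval $[0, t]$ on which $(u(l), p(l)) \equiv (u, p)$. Between jumps the SIR flow \eqref{SIR_dyn} is Lipschitz and starts from $(s, i)$, the probability of at least one jump on $[0, l]$ is $\lambda l + O(l^2)$, and the jump magnitudes in $(S, I)$ are uniformly bounded (Remark \ref{RemCoeff}). Combined, these give $\mathbb{E}|S(l) - s| + \mathbb{E}|I(l) - i| = O(l)$; by Lipschitz continuity of $c_0$ and of the map $(s, i) \mapsto \sum_j (i^0/p_j - i_j)^+$ (with $i^0 := \frac{1}{n}\sum_k i_k$), dominated convergence would then yield
\begin{align*}
\frac{1}{t} \mathbb{E}\int_0^t c_0(S(l), I(l), u, p)\, dl &\longrightarrow c_0(s, i, u, p), \\
\frac{1}{t} \mathbb{E}\int_0^t \sum_{j=1}^n \left(\frac{I^0(l)}{p_j} - I_j(l)\right)^{\!+} u\, dl &\longrightarrow u \sum_{j=1}^n \left(\frac{i^0}{p_j} - i_j\right)^{\!+},
\end{align*}
as $t \to 0+$.

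To close the argument, I would interpret the hypothesis as $\lim_{t \to 0+} t^{-1} (\mathbb{E}[X(t)] - x) = 0$, i.e., the standard net-premium condition; this appears to be the only reading compatible with a non-trivial conclusion, since the absolute value inside the expectation taken literally would force the Poisson jump contribution (which is intrinsically of order $t$ in $\mathbb{L}^1$) to vanish, yielding a degenerate answer rather than \eqref{Expc0}. Dividing the displayed identity by $t$ and letting $t \to 0+$ then produces
\[0 = c_0(s, i, u, p) - \lambda\, \mathbb{E}[C_1]\, u \sum_{j=1}^n \left(\frac{\sum_{k=1}^n i_k}{n p_j} - i_j\right)^{\!+},\]
which is exactly \eqref{Expc0}. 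The main obstacle is really disentangling the interpretation of the hypothesis; once this is fixed, the computation reduces to Dynkin's formula and a routine dominated-convergence argument under Assumption \ref{Ass1}.
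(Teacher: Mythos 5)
Your proposal is correct and follows essentially the same route as the paper's own sketch (short-interval expansion with a frozen control, separation of drift from jump, matching first-order terms in $t$); working through the compensator $\hat{N}(dl,dy)=\lambda\,\mathbb{P}_{C_1}(dy)\,dl$ rather than the explicit one-jump indicator $\mathbf{1}_{\tau_1\le t}$ is an equivalent and slightly cleaner piece of bookkeeping. Your observation on the absolute value is well taken: since $c_0\ge 0$ and the claim outflow is non-negative, the drift and jump contributions to $\mathbb{E}\bigl[|X(t)-x|\bigr]$ have the \emph{same} sign to first order, so the literal reading of the hypothesis would force $c_0\equiv 0$; the paper's own estimates implicitly use the net-premium reading $\lim_{t\to 0+}t^{-1}\bigl(\mathbb{E}[X(t)]-x\bigr)=0$, and the statement should be read (or amended) accordingly.
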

The proof relies on standard estimates involving Poisson processes and related smooth equations. We shall sketch the main arguments at the end of the Appendix.
\begin{remark}
\begin{enumerate}
\item The form in \eqref{Expc0} shows that the premium only depends on \emph{infectiousness} and \emph{exposure} via the protection levels.
\item The function $c_0$ cannot be obtained as $\sum_{1\leq j\leq n}c_{0,j}(i_j,p_j)$, for non-negative functions $c_{0,j}$. Indeed, $c_0(i,i,\ldots, i,p)=0$, for every $i\in \pp{0,1}$ and every $p$. As a by-product, this would imply that $c_{0,j}=0$ or, again, $c_0=0$. This is to be interpreted as the fact that the insurer does not take $n$ individual clients, but a whole network with $n$ edges.
\end{enumerate}
\end{remark}
{
\subsection{Limits of the Model}
\textbf{Network Topology Beyond Star-Shaped Model:} While the star topology offers a clean, tractable starting point for cyber-risk analysis, extending methods to general graphs is essential to capture the complexity and interdependencies of real-world networks, thereby improving practical risk management and resilience strategies. Two elements are worth mentioning.\\
1. The more realistic models would require \emph{attack graphs} capturing multiple attack paths providing better dynamic model (and multi-node aspects). \\
2. While our model uses spreads from the average infectiousness, the \emph{directed acyclic graphs} can account for cascading effects. From this dynamical point of view, \cite{GoreacKobylanskiMartinez_2016} provides a more realistic framework. \\
\noindent 
\textbf{More Realistic Epidemics Models:} Within each edge, we have used a simple SIR model, and have dropped the $R$ component in order to simplify the model.  Of course,  loss of acquired immunity or reinfections may occur in a more direct way (i.e., possibly non-jump transitions from $R$ to $S$ or from $R$ to $I$), accounting for more realistic models. \\
Furthermore, standard SIR models can underestimate epidemic severity and reproduction numbers by failing to capture delays and heterogeneity in multi-wave or spatially staggered outbreaks. This bias may lead to inaccurate predictions and under-preparedness, highlighting the need for model extensions that explicitly account for temporal and spatial epidemic variations. 
}
\section{Appendix}
\subsection{Proof of Proposition \ref{PropEstimTraj}}
\begin{proof}[Proof of Proposition \ref{PropEstimTraj}]
\begin{enumerate}
\item 
Let us fix a predictable control $(\bar{u},e)$ and the initial datum. Itô's formula applied to $\abs{\cdot}^2$ and the process $\bar{X}^{+}_1-\bar{X}^{+}_2:=\bar{X}^{+,\bar{X}_1,\bar{u},e}-\bar{X}^{+,\bar{X}_2,\bar{u},e}$ on $\pp{0,t}$ yields
\begin{equation*}
\begin{split}
&\mathbb{E}\pp{\abs{\pr{\bar{X}_1^+-\bar{X}_2^+}(t)}^2}\\
&=\abs{\pr{\bar{X}_1-\bar{X}_2}(0)}^2\\&\quad+2\int_0^t\mathbb{E}\pp{\scal{f^+\pr{\bar{X}_1^+(l),\bar{u}(l),e(l)}-f^+\pr{\bar{X}^+_2(l),\bar{u}(l),e(l)},\bar{X}_1^+(l)-\bar{X}^+_2(l)}dl}\\&\quad+\int_0^t\begin{split}&\mathbb{E}\bigg[\abs{\bar{X}_1^+(l-)+g^+\pr{\bar{X}_1^+(l-),\bar{u}(l),e(l),y}-\bar{X}_2^+(l-)-g^+\pr{\bar{X}_2^+(l-),\bar{u}(l),e(l),y}}^2\\&-\abs{\bar{X}_1^+(l-)-\bar{X}_2^+(l-)}^2\bigg]\lambda\mathbb{P}_{C_1}(dy)
\end{split}\\
&\leq \mathbb{E}\pp{\abs{\pr{\bar{X}^+_1-\bar{X}^+_2}(0)}^2}\\&\quad+\pr{2\pp{f}_1+\lambda\int_{\mathbb{R}}\pp{2\pp{g}_1\pr{y}+\pp{g}_1^2(y)}\mathbb{P}_{C_1}(dy)}\int_0^t\mathbb{E}\pp{\abs{\pr{\bar{X}^+_1-\bar{X}^+_2}(l)}^2}dl.
\end{split}\end{equation*}
The conclusion follows owing to Gronwall's inequality.
\item Similarly, let us fix $\bar{u}$ and $e$ and the initial datum $\bar{X}_0\in\mathbb{R}^m$. As before, we make a notation by setting $\bar{X}^+=\bar{X}^{+,\bar{X}_0,\bar{u},e},\ \bar{X}=\bar{X}^{\bar{X}_0,\bar{u}}$.  Itô's formula for the squared norm applied to the difference of the two processes yields
\begin{equation*}
\begin{split}
&\mathbb{E}\pp{\abs{\pr{\bar{X}^+-\bar{X}}(t)}^2}\\
&=2\int_0^t\mathbb{E}\pp{\scal{f\pr{\bar{X}^+(l)+e(l),\bar{u}(l)}-f\pr{\bar{X}(l),\bar{u}(l)},\bar{X}^+(l)-\bar{X}(l)}dl}\\&\quad+\int_0^t\begin{split}&\mathbb{E}\bigg[\abs{\bar{X}^+(l-)+e(l)+g\pr{\bar{X}^+(l-)+e(l),\bar{u}(l),y}-\bar{X}(l-)-g\pr{\bar{X}(l-),\bar{u}(l),y}}^2\\&-\abs{\bar{X}^+(l-)-\bar{X}(l-)}^2\bigg]\lambda\mathbb{P}_{C_1}(dy)
\end{split}\\
&\leq \pr{\pp{f}_1+2\lambda\int_{\mathbb{R}}\pr{1+\pp{g}_1\pr{y}}^2\mathbb{P}_{C_1}(dy)}\int_0^t\mathbb{E}\pp{\abs{e(l)}^2}dl\\&\quad+\pr{3\pp{f}_1+\lambda\int_{\mathbb{R}}\pp{1+4\pp{g}_1(y)+2\pp{g}_1^2(y)}\mathbb{P}_{C_1}(dy)}\int_0^t\mathbb{E}\pp{\abs{\pr{\bar{X}^+_1-\bar{X}^+_2}(l)}^2}dl.
\end{split}\end{equation*}
The conclusion is, once again, due to Gronwall's inequality.
\item With the notation $\bar{X}^+:=\bar{X}^{+,\bar{X}_0,\bar{u},e}$, Itô's formula for $\abs{\cdot}^4$ applied on $\pp{0,t}$ to the process $\bar{X}^+$ yields
\begin{equation*}
\begin{split}
&\mathbb{E}\pp{\abs{\bar{X}^{+}(t)}^4}\\
=&\abs{\bar{X}_0}^4+4\int_0^t\mathbb{E}\pp{\abs{\bar{X}^+(l)}^2\scal{f^+\pr{\bar{X}^+(l),\bar{u}(l),e(l)},\bar{X}^+(l)}}dl\\
&+\lambda\int_0^t\mathbb{E}\pp{\abs{\bar{X}^+(l-)+e(l)+g^+\pr{\bar{X}^+(l-),\bar{u}(l),e(l),y}}^4-\abs{\bar{X}^+(l-)}^4}\mathbb{P}_{C_1}(dy)dl\\
\leq &\abs{\bar{X}_0}^4+\frac{\lambda+1}{2}\int_0^t\mathbb{E}\pp{\abs{\bar{X}^+(l)}^4}dl+5^4\max\set{\lambda, 1}\pp{\norm{f}_0^4
+\int_{\mathbb{R}}\norm{g}_0^4(y)\mathbb{P}_{C_1}(dy)}t,
\end{split}
\end{equation*}
where we have used simple inequalities 
\begin{align*}
\begin{cases}
&(\abs{x}+1+g)^4< \frac{3}{2}\abs{x}^4+5^4(1+g)^4;\\ 
&4\abs{x}^3\abs{f}< \frac{1}{2}\abs{x}^4+5^4\abs{f}^4.
\end{cases}
\end{align*}The reader is reminded that $\norm{g}_0$ is actually the bound for $1+g$. The inequality \eqref{CompactEstim} follows again from Gronwall's inequality.
\item Let us fix $q\geq 2$, a predictable control $(\bar{u}:=\pr{u,p},e)$ and initial data $\bar{X}_1(0),\bar{X}_2(0)\in\mathbb{R}^m$. Itô's formula applied to $\abs{\cdot}^q$ and the process $\bar{X}^{+}_1-\bar{X}^{+}_2:=\bar{X}^{+,\bar{X}_1(0),u,p,e}-\bar{X}^{+,\bar{X}_2(0),u,p,e}$ on $\pp{0,t}$ yields
\begin{equation*}
\begin{split}
&\mathbb{E}\pp{\abs{\pr{\bar{X}_1^+-\bar{X}_2^+}(t)}^q}\\
&=\abs{\pr{\bar{X}_1-\bar{X}_2}(0)}^q\\&+q\int_0^t\mathbb{E}\pp{\abs{\bar{X}_1^+-\bar{X}_2^+}^{q-2}\scal{f^+\pr{\bar{X}_1^+(l),\bar{u}(l),e(l)}-f^+\pr{\bar{X}^+_2(l),\bar{u}(l),e(l)},\bar{X}_1^+(l)-\bar{X}^+_2(l)}dl}\\&\begin{split}+\lambda &\int_0^t\mathbb{E}\bigg[\abs{\bar{X}_1^+(l-)+g^+\pr{\bar{X}_1(l-),\bar{u}(l),e(l),y}-\bar{X}_2^+(l-)-g^+\pr{\bar{X}_2^+(l-),\bar{u}(l),e(l),y}}^q\\&\quad -\abs{\bar{X}_1^+(l-)-\bar{X}_2^+(l-)}^q\bigg]\lambda\mathbb{P}_{C_1}(dy).
\end{split}
\end{split}\end{equation*}
Owing to the assumptions on $g$, it follows that  
\begin{equation*}
\mathbb{E}\pp{\abs{\pr{\bar{X}_1^+-\bar{X}_2^+}(t)}^q}\leq \abs{\pr{\bar{X}^+_1-\bar{X}^+_2}(0)}^q+q\pp{f}_1\int_0^t\mathbb{E}\pp{\abs{\pr{\bar{X}^+_1-\bar{X}^+_2}(l)}^q}dl.
\end{equation*}
The conclusion follows, as before, owing to Gronwall's inequality.
Finally, one notes that if $\bar{X}:=X^{\bar{X}_1(0),\bar{u}}$, then \begin{equation*}
\begin{split}
&\mathbb{E}\pp{\abs{\pr{\bar{X}_1^+-\bar{X}}(t)}^q}\\
&=q\mathbb{E}\pp{\int_0^t\abs{\bar{X}_1^+(l)-\bar{X}(l)}^{q-2}\scal{f^+\pr{\bar{X}_1^+(l),\bar{u}(l),e(l)}-f\pr{\bar{X}(l),\bar{u}(l)},\bar{X}_1^+(l)-\bar{X}(l)}dl}\\&\begin{split}&\quad+\int_0^t\mathbb{E}\bigg[\abs{\bar{X}_1^+(l-)+e(l)+g^+\pr{\bar{X}_1(l-),\bar{u}(l),e(l),y}-\bar{X}(l-)-g\pr{\bar{X}(l-),\bar{u}(l),y}}^q\\&\quad \quad\quad\quad\quad-\abs{\bar{X}_1^+(l-)-\bar{X}(l-)}^q\bigg]\lambda\mathbb{P}_{C_1}(dy)\end{split}\\
&\leq q\pp{f}_1\int_0^t\mathbb{E}\pp{\abs{\bar{X}_1^+(l)-\bar{X}(l)}^{q-1}\pr{\abs{e(l)}+\abs{\bar{X}_1^+(l)-\bar{X}^+(l)}}}dl\\&\quad+\lambda\mathbb{E}\pp{\int_0^t\abs{\bar{X}^+_1(l-)+e(l)-\bar{X}(l-)}^qdl}\\
&\leq \pp{(2q-1)\pp{f}_1+\lambda q}\int_0^t\mathbb{E}\pp{\abs{\bar{X}_1^+(l)-\bar{X}(l)}^qdl}+\pr{\pp{f}_1+\lambda q}\int_0^t\mathbb{E}\pp{\abs{e(l)}^q}dl.
\end{split}\end{equation*}where we have used classical inequalities $x^{q-1}a\leq \frac{q-1}{q}x^q+\frac{1}{q}a^q$ and $(x+a)^q\leq q(x^q+a^q)$ for $x,a\in\mathbb{R}_+$. We conclude, as for the other assertions, by using Gronwall's inequality.
\end{enumerate}
\end{proof}
\subsection{Elements of Proof for Theorem \ref{ThGaclco}}
\begin{proof}[Proof of Theorem \ref{ThGaclco}]
 \begin{enumerate}
 \item The first assertion is a direct consequence of \cite[Corollary 8]{goreac_serea_ZubovPDMP} for the extended dynamics $\pr{a,\bar{X}}$. \\
 The only notable differences are the support condition as well as the fourth moment bound. To understand these, one notes that, for an admissible control, $\bar{u}_0$, one gets $a^{a_0,\bar{x}_0,\bar{u}}(t)=a_0e^{-ht}$, for $t\geq 0$, which leads to the support condition for the occupation measure $\gamma=\gamma^{a_0,\bar{x}_0,\bar{u}_0}\in \Gamma(a_0,\bar{x}_0)$. \\
Furthermore, one has \begin{align*}
 \int_{\mathbb{R}^{m+1}\times \bar{U}}\abs{\bar{x}}^4\gamma^{a_0,\bar{x}_0,\bar{u}_0}(da,d\bar{x},d\bar{u})=\mathbb{E}\pp{\int_0^\infty e^{-t}\abs{X^{x_0,\bar{u}_0}(t)}^4dt},
 \end{align*}
 and the estimate on the fourth moment follows from \eqref{CompactEstim} when $h\geq 1\geq \lambda$,  the last inequality being by assumption. The support and moment bounds are easily extended to convex combinations and limits of these, hence providing the characterization in $\Theta$ and they also imply the argument on the Wasserstein space $\mathcal{P}_2\pr{\mathbb{R}^{m+1}}$. Let us emphasize that the nature of our problem only concerns costs independent of $u$. The arguments in this theorem are actually valid for $\mathcal{P}_2\pr{\mathbb{R}^{m+1}\times\bar{U}}$.
 \item The first equality in the second assertion follows from the first assertion for the continuous cost functional functional $(a,\bar{x})\mapsto a^q\phi\pr{\bar{x}}$ by noting that it can actually be considered as a bounded cost functional given the support of the $a$-marginal of measures in $\Theta_1(a_0,\bar{x}_0)$.\\
 
\noindent For the equality between the first term and the third one, one focuses on the $1+qh$-discounted control problem for the basic dynamics $\bar{X}$, take a look at \eqref{HJBq}. The conclusion follows from \cite[Theorem 7]{goreac_serea_ZubovPDMP} (actually, a small variation in which the discount parameter is $1+qh$ instead of $1$). 
\item The lower bound follows from the explicit form of approximating values in \cite[Section 3.3]{goreac_serea_ZubovPDMP} (see the functions $v_g^\varepsilon(\cdot)$ on page 218, then the convolutions $V^\varepsilon$ in Eq. (15) on page 220). For the last two assertions, the reader is kindly invited to also take a look at the proof of Theorem \ref{PropTool1}.
\end{enumerate}
 \end{proof}
 \subsection{Proof of Proposition \ref{PropBI}}
 We provide here the proof of the auxiliary result allowing to obtain the asymptotic behavior of the Hamiltonian in \eqref{HJBeq'}.\\
 
 \begin{proof}[Proof of Proposition \ref{PropBI}]
Let us denote by $\rho$ the continuity modulus of $\phi_q$ for $q\in [2,\infty]$ and by $\rho_\beta$ the continuity modulus of $\beta$.\\
If $\bar{u}\in \bar{U}\pr{\phi_\infty,\nu}$, then $\phi_q\pr{\beta(\bar{u},y)}\leq \phi_\infty\pr{\beta(\bar{u},y)}\leq \nu$. Then, \[\alpha\pr{\bar{u}}+\frac{1}{q}\pr{\frac{\norm{\phi_q\pr{\beta(\bar{u},\cdot)}}_{\mathbb{L}^q\pr{\mathbb{P}_{C_1}}}}{\nu}}^q\mu\leq \alpha\pr{\bar{u}}+\frac{\mu}{q}. \]This yields that the left-hand term cannot exceed the right-hand one. \\
Let us denote by $\bar{\alpha}$ the value of the right-hand term and assume that it is finite and that, for some $\delta>0$,
\[\lim_{q\rightarrow\infty}\min_{\bar{u}\in\bar{U}}\pp{\alpha(\bar{u})+\frac{1}{q}\pr{\frac{\norm{\phi_q\pr{\beta\pr{\bar{u},\cdot}}}_{\mathbb{L}^q\pr{\mathbb{P}_{C_1}}}}{\nu}}^q\mu}<\bar{\alpha}-3\delta.\]
Then, for every $q$ large enough, there exists some $\bar{u}_q$ such that \begin{equation}\label{inequq}\pp{\alpha(\bar{u}_q)+\frac{1}{q}\pr{\frac{\norm{\phi_q\pr{\beta\pr{\bar{u}_q,\cdot}}}_{\mathbb{L}^q\pr{\mathbb{P}_{C_1}}}}{\nu}}^q\mu}<\bar{\alpha}-2\delta.\end{equation} 
By compactness, $\bar{u}_q$ converges (along some subsequence) to some $\bar{u}_\infty\in\bar{U}$.  Furthermore \[\norm{\phi_q\pr{\beta\pr{\bar{u}_q,\cdot}}}_{\mathbb{L}^q\pr{\mathbb{P}_{C_1}}}\leq \nu \pr{\frac{q}{\mu}\pr{\bar{\alpha}-2\delta-\alpha\pr{\bar{u}_q}}}^{\frac{1}{q}}.\]
The reader is reminded that \begin{align*}
\abs{\norm{\phi_q\pr{\beta\pr{\bar{u}_q,\cdot}}}_{\mathbb{L}^q\pr{\mathbb{P}_{C_1}}}-\norm{\phi_q\pr{\beta\pr{\bar{u}_\infty,\cdot}}}_{\mathbb{L}^q\pr{\mathbb{P}_{C_1}}}}
&\leq\norm{\phi_q\pr{\beta\pr{\bar{u}_q,\cdot}}-\phi_q\pr{\beta\pr{\bar{u}_\infty,\cdot}}}_{\mathbb{L}^q\pr{\mathbb{P}_{C_1}}}\\
&\leq \rho\pr{\rho_\beta\pr{\abs{\bar{u}_q-\bar{u}_\infty}}}, 
\end{align*}such that
$\norm{\phi_q\pr{\beta\pr{\bar{u}_\infty,\cdot}}}_{\mathbb{L}^q\pr{\mathbb{P}_{C_1}}}\leq \nu \pr{q\frac{\bar{\alpha}}{\mu}}^{\frac{1}{q}}+ \rho\pr{\rho_\beta\pr{\abs{\bar{u}_q-\bar{u}_\infty}}}.$Taking the limit as $q\rightarrow\infty$, it follows that $\bar{u}_\infty\in \bar{U}(\phi_\infty,\nu)$. Going back to the initial inequality satisfied by $\bar{u}_q$ Eq. \eqref{inequq}, it follows, from the continuity of $\alpha$, that $\alpha\pr{\bar{u}_\infty}<\bar{\alpha}$ which is a contradiction with the definition of $\bar{\alpha}$.\\
When $\bar{\alpha}=+\infty$, it follows that $\bar{U}(\phi_\infty,\nu)=\emptyset$. By assuming that the left-hand limit is finite, we can apply the same kind of reasoning to show the existence of $\bar{u}_\infty\in  \bar{U}(\phi_\infty,\nu)$, thus contradicting the emptiness. It follows that, in this case too, the equality holds true.
\end{proof}
\subsection{Proofs for Section \ref{SecClassSol}}
\begin{proof}[Sketch of the Proof of Proposition \ref{PropU}]
\begin{enumerate}
\item The first assertion is obvious.
\item For the second assertion, let $\omega$ be the the common (non-decreasing) continuity modulus of $(\psi_q)_{q\in\mathbb{R}_+\cup\set{\infty}}$; recall that, by convention, $\psi_\infty=\psi$, and \[\eta(q_0)=2\sup_{q\geq q_0}\sup_{x'\in B_{1+\norm{g}_0}(x)}\abs{\psi_q(x')-\psi(x'))}.\] Then
\begin{align*}
 \psi_{q'}(x+g(x,u,y))\leq \psi_q(x'+g(x',u,y))+\omega \pr{(1+\pp{g}_1)\abs{x-x'}}+\eta(q_0),
\end{align*}and the conclusion follows by picking $\varepsilon=r'-r$, $q_0$ large enough such that $\eta(q_0)\leq \frac{\varepsilon}{2}$ and $\delta$ small enough such that $\omega \pr{(1+\pp{g}_1)\delta}\leq \frac{\varepsilon}{2}$.
\item The assertions 3. (a) and (b) immediately follow from the control sets inclusion. For (c), by fixing $r_0<r$, one easily shows, due to the previous assertions, that $H^*(\psi,x,r,p)\leq H(\psi,x,r_0,p)$ and, as a consequence, $H^*(\psi,x,r,p)\leq H(\psi,x,r-,p)$. The reverse inequality follows from the definition of $H^*$. 
\end{enumerate}
\end{proof}\\

Finally, let us come to the viscosity characterization of $V$.\\

\begin{proof}[Elements of Proof for Theorem \ref{ThClassSol}]
Let $\phi$ be a $C^1$ function s.t. $V-\phi$ admits a strict local minimum at $x$. We further assume $V(x)=\phi(x)$. Since $V_q$ converges uniformly to $V$, it follows that, for every $0<\eta<1$ small enough, $\inf_{x'\in\partial B_\eta(x)}(V_q-\phi)(x')>0$. On the closed set $B_\eta(x)$, the minimum is attained at some $x_q$. \\
\Db{The reader is invited to note that  $\lim_{q\rightarrow\infty}x_q=x$, along some subsequence, still indexed by $q$ for simplicity. Indeed, by compactness of $B_\eta(x)$, one can obtain a subsequence $x_q$ converging to some $y\in B_\eta(x)$. One has $V_q(x_q)-\phi(x_q)\leq V_q(x)-\phi(x)$, and, due to the uniform convergence, and by passing to the limit along this subsequence, $V(y)-\phi(y)\leq V(x)-\phi(x)$. It follows that $y=x$. }\\
The viscosity condition for $V_q$ with the test function $\phi$ and written at $x_q$ yields
\begin{align*}
-\frac{1+hq+\lambda}{q}\phi(x_q)+\underset{\bar{u}\in \bar{U}}{\inf}\Big\{&\scal{f(x_q,\bar{u}),\nabla\phi(x_q)}+\frac{1}{q}\pr{\frac{L(x_q)}{\phi(x_q)}}^q\phi(x_q)\\&+\frac{\lambda}{q}\int_{\mathbb{R}}\pp{\frac{\phi(x_q+g(x_q,\bar{u},y))}{\phi(x_q)}}^q\mathbb{P}_{C_1}(dy)\phi(x_q)\Big\}\leq 0.
\end{align*}
In particular, it follows that, for some constant $k>0$ (independent of $q$), 
\begin{align*}
\begin{cases}
\frac{1}{q}\pr{\frac{L(x_q)}{\phi(x_q)}}^q\phi(x_q)\leq k,\\
{\inf}_{\bar{u}\in \bar{U}}\frac{\lambda}{q}\int_{\mathbb{R}}\pp{\frac{\phi(x_q+g(x_q,\bar{u},y))}{\phi(x_q)}}^q\mathbb{P}_{C_1}(dy)\leq k.
\end{cases}
\end{align*}
As $q\rightarrow\infty$, we have $L(x)\leq \phi(x)$. Assuming this fails to hold, for some $\varepsilon>0$ and $q$ large enough, \[L(x_q)\geq L(x)(1-\varepsilon)\geq \phi(x)(1+2\varepsilon)\geq \phi(x_q)(1+\varepsilon), \] which leads to a contradiction, since $\frac{(1+\varepsilon)^q}{q}$ diverges to $+\infty$, while $\phi(x_q)\geq (1-\varepsilon)\phi(x)=(1-\varepsilon)V(x)$ is strictly positive. 
\\
Furthermore, for every $\varepsilon>0$ small enough, then every $q\geq q_\varepsilon$ large enough,
\begin{align*}
0\geq &-\varepsilon-\frac{1+hq+\lambda}{q}\phi(x)\\&+\underset{\bar{u}\in \bar{U}}{\inf}\Big\{\scal{f(x,\bar{u}),\nabla\phi(x)}+\frac{\lambda}{q}\int_{\mathbb{R}}\pp{\frac{\phi(x+g(x,\bar{u},y))-\varepsilon}{\phi(x)+\varepsilon}}^q\mathbb{P}_{C_1}(dy)(\phi(x)-\varepsilon)\Big\}.
\end{align*}
Letting $q\rightarrow\infty$,  then $\varepsilon\rightarrow 0+$, one gets $\phi(x)\geq L(x)$ and
\[-h\phi(x)+H(\phi,x,\phi(x)+,\nabla\phi(x))\leq 0.\]The subsolution condition is quite similar.\\
The proof of the comparison result is quasi-identical to \cite[Proposition 19]{GHS_2021}. We are here in the infinite horizon case, there is no time modification and the contradiction comes in a standard way from the $-h\psi$ term in the equation \eqref{HJB}.
\end{proof}
\subsection{Sketch of the Proof of Proposition \ref{PropPremium}}
\begin{proof}
We begin with the following estimates on the edges.  For simplicity, we drop the upper-script describing the dependence on the initial data and the control parameters. To this purpose, we assume that one uses a piecewise constant control couple $(u,p)$ \Db{and consider $t>0$ small enough such that this constant policy is considered on $[0,t]$}.
We have the following straightforward developments, with the notations $\bar{X}=(S,I,X)$ and the coefficients $f$ and $g$ as described in Section \ref{SubsfgIns}.
\[\bar{X}(t)= \bar{X}(0)+o\pr{\sqrt{t}},\ \textnormal{in }\mathbb{L}^2 \textnormal{ (hence in }\mathbb{L}^1);\]
\[\bar{X}(t)= \bar{X}(0)+\int_0^t f(\bar{X}(0),u,p)dl+g(\bar{X}(0),\Db{u,p},C_1)\mathbf{1}_{\tau_1\leq t}+o(t),\textnormal{ in }\mathbb{L}^1;\]
\[c(S(0),I(0),u,p)=\lim_{t\rightarrow 0+}\frac{1}{t}\int_0^t c(S(l),I(l),u,p)dl,\ \textnormal{ in }\mathbb{L}^1;\]
\begin{align*}&\mathbb{E}\pp{\int_0^t\sum_{1\leq j\leq n}\int_{\mathbb{R}_+}\pr{\frac{I^{0,u,p}(t-)}{p_j(t)}-I_j^{u,p}(t-)}^+yN_p(ds,dy)}\\&=\sum_{1\leq j\leq n}\mathbb{E}\pp{\pr{I_j(\tau_1)-I_j(\tau_1-)}\mathbf{1}_{\tau_1\leq t}}\mathbb{E}\pp{C_1}+o(t)=\sum_{1\leq j\leq n}\pr{\frac{\sum_{1\leq \Db{k} \leq n}I_{\Db{k}}(0)}{np_j}-I_j(0)}^+\lambda t\mathbb{E}\pp{C_1}+o(t), 
\end{align*}thus leading to the statement.
\end{proof}
\bibliographystyle{abbrv}      
\bibliography{bibl_2024_fin} 
\end{document}